\newcommand{\R}{\mathbb{R}}
\newcommand{\Q}{\mathbb{Q}}
\newcommand{\Z}{\mathbb{Z}}
\newcommand{\p}{\mathbb{P}}
\newcommand{\T}{\mathbb{T}}
\newcommand{\ext}{\text{Ext}}
\newcommand{\extv}{\ext_{\text{\tiny{TVS}}}}
\newcommand{\id}{\text{Id}}
\newtheorem{theorem}{Theorem}
\newtheorem{lemma}[theorem]{Lemma}
\newtheorem{proposition}[theorem]{Proposition}
\newtheorem{corollary}[theorem]{Corollary}
\theoremstyle{definition}\newtheorem{remark}[theorem]{Remark}
\theoremstyle{definition}
\theoremstyle{definition}
\theoremstyle{definition}
\theoremstyle{definition}\newtheorem{example}[theorem]{Example}
\theoremstyle{definition}
\title{The Ext group in the categories of topological abelian groups and topological vector spaces}
\author{ {Hugo J. Bello }
	\\{\footnotesize Departamento de F\'{\i}sica}\\ {\footnotesize y Matem\'atica Aplicada,}
	\\ \emph{\footnotesize University of Navarra, Spain}
	\\ {\footnotesize {\tt hbello.1@alumni.unav.es}}}
\begin{document}
\maketitle
\begin{abstract}
	This paper is devoted to the study of the group $\ext (G,H)$ of all extensions of topological abelian groups $0\to H\to X\to G\to 0$ and the group $\extv (Z,Y)$ of all extensions of topological vector spaces $0\to Y\to X\to Z\to 0$. We focus on their behaviour under taking products, countable coproducts, dense subgroups   and open subgroups. Finally, we apply the obtained properties  to formulate in a more general setting some known results in the category of locally compact abelian groups   and to determine conditions in which \emph{being a topological vector space} is a three space property.\\
	
	\noindent	\textit{Math. Subj. Class. (2010):} 54H11, 
	22B05, 
	20K35, and  
	57N17. 
\end{abstract}

\section{Introduction}

An extension of an abelian group $G$ by an abelian group $H$ is an abelian group $X\geq H$ such that $X/H\cong G$. In the terminology of homological algebra, an extension of abelian groups is a short exact sequence $0\to H\to X\to G\to 0$. The set   of all extensions of the previous form turns out to  be an abelian group and is one of the objects of study  of homological algebra.

In \cite{mosko} Moskowitz studied for the first time the homological algebra in the class of locally compact abelian groups, which we will denote by $\mathcal L$. Among other things he found the injectivities and projectivities of this class. 
Following his steps, Fulp and Griffith introduced  in  \cite{fulp} the group
 $\ext  (G, H)$ 
 of all extensions $0\to H\to X\to G\to 0$ in     $\mathcal L$. 
 More recently Sahleh and Alijani found several cases in which the group $\ext (G,H)$ is trivial under various algebraic and topological conditions imposed on $G$ and $H$ (\cite{sahleh}, \cite{saal}).
 
In the framework of functional analysis Kalton, Peck and Roberts provided in \cite{kaltonsampler} the first extensive study of the splitting of extensions in the class of complete metrizable topological vector spaces. Several problems involving extensions of different types of topological vector spaces were also considered by Doma\'nski  (\cite{dom0}, \cite{domanski} and \cite{dom2}). 
Latterly Castillo and Sim\~oes (\cite{cast})   studied the limit properties of the functor $\ext$ in the category of complete locally bounded topological vector spaces.

Our purpose is to investigate the group $\ext (G, H)$ of all extensions   $0\to H\to X\to G\to 0$ in the class of topological abelian groups  and the group $\extv (Y, Z)$ of all extensions   $0\to Y\to X\to Z\to 0$ in the class of topological vector spaces. 
In   sections \ref{sect_completions} and \ref{sect_products}   we will study the properties of $\ext$ and $\extv$ when we take products, countable coproducts, dense subgroups and open subgroups. 
We will apply these   properties to the class $\mathcal L$ to show that several theorems proven in \cite{fulp}, \cite{sahleh} and \cite{saal} can be formulated in a more general context.
 
In the fifth section we will use the techniques of section \ref{sect_completions} to discuss the following problem: Given two topological vector spaces $Y$ and $Z$ and an extension of topological abelian groups $0\to Y\to X\to Z\to 0$, find conditions in which we can define in $X$ a compatible topological vector space structure 
in such a way that $0\to  Y\to X\to Z\to 0$ becomes an extension of  topological vector spaces.
This problem was studied by Cattaneo (see \cite{cattaneo}) and by Cabello (see \cite{cabellojmaa}) in several situations in which the completeness of $Z$ is required. We will show that their results can be proved without making use of the completeness of $Z$.

\section{Preliminaries}

All the topological groups   will be Hausdorff.  Since we will  deal only with abelian groups  we will use additive notation.

Given a topological abelian group $G$ and a point $x\in G$ we will use $\mathcal N_x (G)$ to denote a system of neighbourhoods of $x$ in $G$. 

We will denote by $\omega$ the natural numbers, by $\R$ the set of real numbers, by $\p$ the prime numbers, by $\Q_p$ the $p$-adic numbers and by $\Z_p$ the $p$-adic integers.
We will use Greek letters to denote ordinal numbers which will be used as index sets.

Given a topological abelian group $G$, we will call    its completion $\varrho G$, analogously, if $f:G\to H$ is a continuous homomorphism of topological groups, we will call $\varrho f: \varrho G \to \varrho H$  its completion.
A topological  abelian group  is \emph{precompact} if its   completion is a compact group and it is  locally precompact if its  completion is locally compact (\cite[9.13]{RD}).

 A topological abelian group $G$ is called \emph{almost metrizable} if there exist a compact subset $K\subseteq G$ with a countable system of neighbourhoods (see \cite[Section 4.3]{AT}).
  Metrizable topological groups are almost metrizable.
 A topological abelian  group is \v Cech-complete if and only if it is almost metrizable and  complete (\cite[Theorem 4.3.15]{AT}). 
 Every locally compact abelian group is \v Cech-complete (see \cite[Lemma 13.13]{RD}). 
 
  Given $\{G_\alpha:\alpha<\kappa\}$ a family of topological abelian groups 
the \emph{coproduct topology} on the direct sum $\bigoplus_{\alpha<\kappa} G_\alpha$ is the final group topology with respect to the natural inclusions $ G_\gamma \hookrightarrow \bigoplus_{\alpha<\kappa} G_\alpha $. If $\kappa=\omega$ the box topology and the coproduct topology coincide.

 We will work with  topological vector spaces over the field $\R$. A topological vector space (shortly t.v.s.) is called Fr\'{e}chet if it is metrizable,  complete and locally convex.

A short exact sequence of  topological groups  $E: 0\to H\stackrel{\imath}{\to} X \stackrel{\pi}{\to}G\to 0$ will be called a {\it extension of topological groups} (shortly \emph{extension})  when both $\imath$ and $\pi$ are continuous and open onto their images.
Two  extensions of topological abelian groups $E:0\to H\stackrel{\imath}{\to} X \stackrel{\pi}{\to}G\to 0 $ 
and $E\rq{}:0\to H\stackrel{\imath'}{\to} X' \stackrel{\pi'}{\to}G\to 0 $ are said to be equivalent if there exists a 
continuous homomorphism $T:X\to X'$ making the following diagram commutative
\[
\xymatrix{
	E:&	0\ar[r]   &H\ar[r]^\imath\ar@{=}[d]   &X\ar[r]^\pi\ar[d]^T        &G\ar@{=}[d]\ar[r]  &0\\
	E':&	0\ar[r]   &H\ar[r]^{i'}           &X'\ar[r]^{\pi'}                &G\ar[r]  &0
}
\]

(i.~e.~ $T\circ \imath=\imath',$ $\pi'\circ T=\pi$). It is known that if such a $T$ exists, it must actually be a topological isomorphism (the proof of this fact is the same as \cite[Lemma A]{domanski}).

An extension of topological abelian groups  $E:0\to H\stackrel{\imath}{\to} X \stackrel{\pi}{\to}G\to 0 $  \emph{splits} if and only if it is equivalent to the trivial   extension $E_0: 0\to H\stackrel{\imath_0}{\to} H\times G \stackrel{\pi_0}{\to}G\to 0$ where $\imath_0$, $\pi_0$ are the canonical maps and $H\times G$ is endowed with the product topology. 
Note that the extension of topological groups $E$ splits if and only if $\imath (H)$ splits as a subgroup of $X$.

If $X,Y$ and $Z$ are topological vector spaces, a sequence  $E:0\to Y\stackrel{\imath}{\to} X \stackrel{\pi}{\to}Z\to 0 $ is called \emph{an extension of topological vector spaces} if  it is an extension of topological abelian groups and the maps $\imath$ and $\pi$ are also linear.  An extension of topological vector spaces splits if and only if it splits as an extension of topological abelian groups.

The following known characterization is essential when dealing with  extensions of topological groups and topological vector spaces. 
\begin{proposition}\label{theorem_section} Let $E: 0\to H\stackrel{\imath}{\to} X \stackrel{\pi}{\to}G\to 0$ be an extension of topological groups (resp. t.v.s.). The following are equivalent:
	\begin{enumerate}[(i)]
		\item $E$ splits.
		
		\item There exists a a right inverse for $\pi$,  i.e. a continuous homomorphism (linear mapping) $S:G\to X$ with $\pi\circ S={\rm id}_G$.  
		\item There exists a left inverse for $\imath$,  i.e. a   continuous homomorphism (linear mapping) $P:X\to H$ with $P\circ \imath={\rm id}_H$. 
	\end{enumerate}
\end{proposition}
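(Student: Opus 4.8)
The plan is to treat (i) as the hub and establish the two equivalences (i)$\Leftrightarrow$(ii) and (i)$\Leftrightarrow$(iii) separately; in each case the implication \emph{from} splitting is immediate from the canonical maps of the trivial extension $E_0$, while the converse is proved by exhibiting an explicit morphism of extensions and then appealing to the fact quoted just before the statement, namely that any morphism of extensions inducing the identity on $H$ and $G$ is automatically a topological isomorphism. I would phrase everything so that the group case and the t.v.s.\ case are handled simultaneously: every map constructed below is a sum or a pairing of the given continuous (resp.\ continuous and linear) maps, so it is continuous (resp.\ continuous and linear) for free, and the only algebraic identities to verify are the commutation relations defining a morphism of extensions.

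For (ii)$\Rightarrow$(i), given a section $S\colon G\to X$ with $\pi\circ S={\rm id}_G$, I would define $T\colon H\times G\to X$ by $T(h,g)=\imath(h)+S(g)$. This $T$ is a continuous homomorphism, and a direct check gives $T\circ\imath_0=\imath$ (since $S(0)=0$) and $\pi\circ T=\pi_0$ (since $\pi\circ\imath=0$ and $\pi\circ S={\rm id}_G$), so $T$ is a morphism from $E_0$ to $E$; by the quoted fact it is a topological isomorphism, whence $E$ is equivalent to $E_0$ and therefore splits. Symmetrically, for (iii)$\Rightarrow$(i), given a retraction $P\colon X\to H$ with $P\circ\imath={\rm id}_H$, I would set $T\colon X\to H\times G$, $T(x)=(P(x),\pi(x))$; then $T\circ\imath=\imath_0$ and $\pi_0\circ T=\pi$, so $T$ is a morphism from $E$ to $E_0$ and hence a topological isomorphism, again giving that $E$ splits.

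The forward implications use the obvious structure of $E_0$. If $E$ splits, fix an equivalence $T\colon X\to H\times G$ with $T\circ\imath=\imath_0$ and $\pi_0\circ T=\pi$. For (i)$\Rightarrow$(ii) I would compose the canonical section $S_0\colon G\to H\times G$, $S_0(g)=(0,g)$, with $T^{-1}$, putting $S=T^{-1}\circ S_0$; then $\pi\circ S=(\pi_0\circ T)\circ T^{-1}\circ S_0=\pi_0\circ S_0={\rm id}_G$. For (i)$\Rightarrow$(iii) I would compose the canonical projection $P_0\colon H\times G\to H$, $P_0(h,g)=h$, with $T$, putting $P=P_0\circ T$; then $P\circ\imath=P_0\circ(T\circ\imath)=P_0\circ\imath_0={\rm id}_H$. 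Both $S$ and $P$ are continuous (resp.\ linear) as composites of such maps.

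I expect the only real subtlety to be the topological one: in the purely algebraic setting the converse implications are routine, but here one must know that the algebraic isomorphism produced is in fact a homeomorphism. This is exactly what the cited analogue of \cite[Lemma~A]{domanski} supplies, so by routing both converses through that statement I avoid having to verify by hand that $\imath$ is open onto its image and that $\pi$ is a quotient map. If one preferred a self-contained argument instead, the inverse of $T$ in (ii)$\Rightarrow$(i) is $x\mapsto\bigl(\imath^{-1}(x-S(\pi(x))),\,\pi(x)\bigr)$, and its continuity is precisely the point at which the openness of $\imath$ and $\pi$ onto their images would have to be invoked.
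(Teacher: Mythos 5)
Your argument is correct; note that the paper states this proposition as a known characterization and supplies no proof of its own, so there is nothing to compare against. The route you take --- building the explicit morphisms $(h,g)\mapsto\imath(h)+S(g)$ and $x\mapsto(P(x),\pi(x))$, checking the commutation relations, and invoking the fact (the analogue of \cite[Lemma A]{domanski} quoted just before the statement) that such a morphism is automatically a topological isomorphism --- is the standard one, and you correctly pinpoint that the openness of $\imath$ and $\pi$ onto their images is precisely what makes the algebraic inverse continuous if one wants a self-contained argument.
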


In the following lemmas we introduce the \emph{push-out} and \emph{pull-back} extensions in the categories of topological abelian groups and topological vector spaces. We will not prove these results because the argument is essentially the same   as the one used in abstract abelian groups,   with the obvious replacements. For more details see \cite[Lemmas 1.2, 1.4 and Theorem 2.1 of chapter III]{maclane}.

\begin{lemma}\label{lemma_pullback} 
	Let $E:  0\to H\stackrel{\imath}{\to} X \stackrel{\pi}{\to}G\to 0$ be an extension of topological abelian groups (resp. t.v.s.), let $Y$ be a topological abelian group (t.v.s.), and let $t:Y\to G$ be a continuous homomorphism (linear mapping). The diagram
	\begin{equation}\label{equa_unique2}
	\xymatrix{
		E:&	0\ar[r]   &H\ar[r]^\imath   &X\ar[r]^\pi        &G\ar[r]  &0\\
		&	&           &                                  &Y\ar[u]^t&
	}
	\end{equation}
	can be completed to a diagram of the form
	\[
	\xymatrix{
		E:&	0\ar[r]   &H\ar[r]^\imath   &X\ar[r]^\pi        &G \ar[r]  &0\\
		Et:&	0\ar[r]   &H \ar@{=}[u]\ar[r]^I           &PB\ar[r]^r\ar[u]^s                &Y\ar[u]^t\ar[r]  &0
	}
	\]
	where $PB,$ $r$ and $s$ form the pull-back of $\pi$ and $t$ in the category of topological abelian groups (t.v.s.). The bottom sequence $ Et$ is an extension of topological abelian groups (t.v.s.) which will be called \emph{the pull-back extension}. Furthermore if $\mathcal E:   0\to H\stackrel{I'}{\to} X' \stackrel{r'}{\to}G\to 0$ is another extension of topological abelian groups (t.v.s.) that completes the diagram (\ref{equa_unique2}) in the same way, then $\mathcal E$ and $ Et$ must be equivalent.
\end{lemma}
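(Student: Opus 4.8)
The plan is to carry out the classical pull-back construction of homological algebra, checking at each stage that the extra topological (and, in the t.v.s. case, linear) requirements are met. First I would define the pull-back object explicitly as the subgroup (resp. linear subspace)
\[
PB=\{(x,y)\in X\times Y:\pi(x)=t(y)\}
\]
of the product $X\times Y$, endowed with the subspace topology, and set $s$ and $r$ to be the restrictions to $PB$ of the two canonical projections, together with $I:H\to PB$ given by $I(h)=(\imath(h),0)$. In the t.v.s.\ case $PB$ is visibly a linear subspace and $s,r,I$ are linear. Commutativity of the two squares, i.e.\ $s\circ I=\imath$, $r\circ I=0$ and $\pi\circ s=t\circ r$, is immediate from the definitions.

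Next I would verify that $Et$ is algebraically exact, exactly as for abstract abelian groups: $I$ is injective because $\imath$ is; one has $\Ker r=\{(x,0):x\in\Ker\pi\}=\{(\imath(h),0):h\in H\}=\im I$, using exactness of $E$; and $r$ is surjective because $\pi$ is, since for $y\in Y$ one may choose $x\in X$ with $\pi(x)=t(y)$ and then $(x,y)\in PB$ with $r(x,y)=y$.

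The part that requires genuine topological input---and which I regard as the main point, the algebra being formal---is showing that $I$ and $r$ are continuous and open onto their images. Continuity is clear, since $s,r$ are restrictions of projections and $I$ has continuous components. For openness of $I$, given a neighbourhood $U$ of $0$ in $H$, the hypothesis that $\imath$ is open onto its image yields a neighbourhood $W$ of $0$ in $X$ with $\imath(H)\cap W\subseteq\imath(U)$; I would then check that $I(H)\cap\big((W\times Y)\cap PB\big)\subseteq I(U)$, using the injectivity of $\imath$. For openness of $r$, given a basic neighbourhood $(W\times V)\cap PB$ of $0$ in $PB$, I would use that $\pi$ is open to make $\pi(W)$ a neighbourhood of $0$ in $G$ and that $t$ is continuous to make $V\cap t^{-1}(\pi(W))$ a neighbourhood of $0$ in $Y$ contained in $r\big((W\times V)\cap PB\big)$; this is precisely where the openness of $\pi$ is really used. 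In the t.v.s.\ case these arguments are unchanged, as the topologies are translation invariant.

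Finally, for the uniqueness assertion I would establish the universal property of the pull-back. Given another completing extension $\mathcal E$ with maps $s':X'\to X$ and $r':X'\to Y$ satisfying $\pi\circ s'=t\circ r'$ and $s'\circ I'=\imath$, I define $\phi:X'\to PB$ by $\phi(x')=(s'(x'),r'(x'))$. This lands in $PB$ precisely because $\pi\circ s'=t\circ r'$, is continuous (resp.\ linear) since its components are, and satisfies $\phi\circ I'=I$ together with $r\circ\phi=r'$. Hence $\phi$ is a morphism of extensions from $\mathcal E$ to $Et$, and by the remark following the definition of equivalence such a $\phi$ is automatically a topological isomorphism; therefore $\mathcal E$ and $Et$ are equivalent.
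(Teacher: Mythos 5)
Your proof is correct and is exactly the argument the paper has in mind: the paper omits the proof of this lemma, stating that it is the classical pull-back construction of \cite[Chapter III]{maclane} ``with the obvious replacements,'' and your write-up carries out precisely that construction while supplying the topological verifications (continuity and openness onto images of $I$ and $r$, and the universal property combined with the automatic-isomorphism remark for uniqueness). Nothing is missing.
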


 From now on we will use the notation $Et$  to denote  the \emph{pull-back extension} of an extension $E$ with respect to a continuous homomorphism $t$.

\begin{lemma}\label{lemma_pushout} 
	Let $E:  0\to H\stackrel{\imath}{\to} X \stackrel{\pi}{\to}G\to 0$ be an extension of topological abelian groups (resp. t.v.s), let $Y$ be a topological abelian group (resp. t.v.s.), and let $t:H\to Y$ be a continuous homomorphism (linear mapping). The diagram
	\begin{equation}\label{equa_unique}
		\xymatrix{
			E:&	0\ar[r]   &H\ar[r]^\imath\ar[d]^t   &X\ar[r]^\pi        &G\ar[r]  &0\\
			&	&Y           &                &&
		}
	\end{equation}
	can be completed to a diagram of the form
	\[
	\xymatrix{
		E:&	0\ar[r]   &H\ar[r]^\imath\ar[d]^t   &X\ar[r]^\pi\ar[d]^s        &G\ar@{=}[d]\ar[r]  &0\\
		tE:&	0\ar[r]   &Y\ar[r]^r           &PO\ar[r]^p                &G\ar[r]  &0
	}
	\]
	where $PO,$ $r$ and $s$ form the push-out of $\imath$ and $t$ in the category of topological abelian groups (t.v.s.). The bottom sequence $tE$ is an extension of topological abelian groups (t.v.s.) which will be called the push-out extension. Furthermore if $\mathcal E:   0\to H\stackrel{r'}{\to} Y \stackrel{p'}{\to}G\to 0$ is another extension of topological abelian groups (t.v.s.) that completes the diagram (\ref{equa_unique}) in the same way, then $\mathcal E$ and $tE$ must be equivalent. 
\end{lemma}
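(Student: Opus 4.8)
The plan is to construct the push-out object $PO$ explicitly and verify the three things the statement asserts: that the bottom row is a genuine extension of topological abelian groups (resp. t.v.s.), that the square commutes, and that the completion is unique up to equivalence. Following the classical algebraic construction, I would first form the external direct sum $Y\oplus X$ (with the product topology, resp. the product t.v.s. structure) and let $N\subseteq Y\oplus X$ be the subgroup $\{(t(h),-\imath(h)):h\in H\}$. I would then define $PO:=(Y\oplus X)/N$ endowed with the quotient topology, and take $r:Y\to PO$, $p:PO\to G$, and $s:X\to PO$ to be the maps induced by $y\mapsto (y,0)+N$, $(y,x)+N\mapsto \pi(x)$, and $x\mapsto (0,x)+N$ respectively. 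The algebraic verifications (that $p$ is well defined, that $\ker p=\im r$, that the square commutes, that $r$ is injective) are exactly as in \cite[Ch.~III]{maclane} and I would not reproduce them.

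The genuinely topological content, and the step I expect to be the main obstacle, is showing that the bottom row $tE$ is an extension in the sense defined above, i.e.\ that $r$ and $p$ are not merely continuous homomorphisms but also \emph{open onto their images}. Continuity of $r$, $p$, $s$ is immediate from the universal property of the quotient topology once one checks the underlying maps are continuous on $Y\oplus X$. For openness of $p$ one uses that $p$ is the map induced on the quotient by $\mathrm{pr}_X$ followed by $\pi$, together with the fact that $\pi$ is open onto $G$; the quotient map $Y\oplus X\to PO$ is open, so compositions and the induced map inherit the required openness. The delicate point is that $r$ be open onto its image $r(Y)$, equivalently that $N$ is closed and that the subspace topology $PO$ induces on $r(Y)$ agrees with the topology of $Y$; here I would invoke that $\imath$ is open onto its image in $X$ to pull neighbourhoods of $0$ in $Y$ back through the quotient. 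In the t.v.s.\ case the same construction goes through verbatim, $N$ being a linear subspace and all maps being linear, so the push-out exists in the category of topological vector spaces as well.

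For the universal property and the uniqueness claim I would argue categorically. Given any commutative completion $\mathcal E:0\to Y\xrightarrow{r'}Y'\xrightarrow{p'}G\to 0$ with a map $s':X\to Y'$ satisfying $s'\circ\imath=r'\circ t$, the pair $(r',s')$ defines a homomorphism $Y\oplus X\to Y'$ which kills $N$ by the compatibility relation, hence factors through a continuous homomorphism $T:PO\to Y'$. One checks directly that $T\circ r=r'$ and $p'\circ T=p$, so $T$ is a morphism of extensions fixing $Y$ and $G$; by the remark following the equivalence definition in the preliminaries, such a $T$ is automatically a topological isomorphism, giving $\mathcal E\sim tE$. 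Since the excerpt explicitly permits citing \cite[Lemmas 1.2, 1.4 and Theorem 2.1 of Ch.~III]{maclane} and the author states the proof is omitted as it mirrors the abstract case, I would present the construction above as the skeleton and flag only the openness of $r$ and $p$ as the points requiring the topological hypotheses ($\imath$, $\pi$ open onto their images) that distinguish this from the purely algebraic statement.
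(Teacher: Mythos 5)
Your proposal is correct and follows exactly the route the paper intends: the paper gives no proof of this lemma, stating only that the argument is the classical push-out construction of \cite[Ch.~III]{maclane} ``with the obvious replacements,'' and your quotient $(Y\oplus X)/N$ with $N=\{(t(h),-\imath(h)):h\in H\}$, together with the factorization argument for uniqueness, is precisely that construction. You also correctly isolate the only genuinely topological additions --- openness of $p$ via the open quotient map, openness of $r$ onto its image (and closedness of $N$, needed for $PO$ to be Hausdorff) via the hypothesis that $\imath$ is an embedding --- so nothing is missing relative to what the paper leaves to the reader.
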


   From now on we will use the notation $tE $  to denote  the \emph{push-out extension} of an extension $E$ with respect to a continuous homomorphism $t$.

Given two topological groups $G$ and $H$, the set of equivalence classes of extensions of topological groups of the form $0\to H\to X\to G\to 0$ will be denoted by $\text{Ext} (G,H)$. We will write $\text{Ext}(G,H)=0$ when every extension $0\to H\to X\to G\to 0$ splits.

Let $E_1: 0\to H\stackrel{\imath_1}{\to} X_1 \stackrel{\pi_1}{\to}G\to 0 $ and  $E_2: 0\to H\stackrel{\imath_2}{\to} X_2 \stackrel{\pi_2}{\to}G\to 0 $ be two extensions of topological abelian groups. Consider the canonical mappings $\nabla_H: H\times H\to H$ $[(h,h')\mapsto h+ h']$  and $\Delta_G: G\to G\times G$ $[g\mapsto (g,g)]$     and the extension of topological abelian groups $E_1\times E_2: 0\to H\times H\stackrel{\imath_1\times \imath_2}{\to} X_1\times X_2 \stackrel{\pi_1\times \pi_2}{\to}G\times G\to 0 $. We will define the sum $E_1 + E_2$ as the extension $\nabla_H ((E_1\times E_2) \Delta_G)$. This operation is called the Baer sum. The set $\ext(G,H)$ with the operation induced by Baer sum in the equivalence classes  of extensions of topological abelian groups is an abelian group (the proof is essentially the same as \cite[Theorem 2.1 of Chapter III]{maclane}). 

 Given two topological vector spaces $Y,Z$ we will call $\extv (Z,Y)$  to the set of equivalence classes of extensions   of topological vector spaces  of the form $   0\to Y\stackrel{\imath}{\to} X \stackrel{\pi}{\to}Z\to 0 $. Taking the operation induced by the Baer sum defined in the same way,  we can endow $\extv (Z,Y)$ with an structure of abelian group.

 We will say that a complete metrizable topological vector space $Z$ is a $\mathcal K$-space if $\extv(Z,\R)=0$ (see \cite[Chapter 5]{kaltonsampler}).

The following lemma will be very useful. The proof is the same as the one in  \cite[Theorem 2.1 of Chapter III]{maclane}.
\begin{lemma}\label{lemm_homo} Let $G$, $H$, $Y_1$ and $Y_2$ be topological abelian groups. Suppose that $t_1: Y_1 \to G$ and $t_2:H\to Y_2$ are continuous homomorphisms. Then the following maps are homomorphisms of abelian groups
  	\[
	\begin{array}{ccccccc}
	\ext (G,H)&\longrightarrow & \ext(Y_1,H)                             &&\ext (G,H)&\longrightarrow & \ext(G,Y_2)\\
	\left[E \right]        &\longmapsto     & \left[  Et_1\right]     &&\left[E \right]        &\longmapsto     & \left[ t_2E\right]
	\end{array}
	\]
	 The analogous statement for topological vector spaces and continuous linear mappings is also true.

\end{lemma}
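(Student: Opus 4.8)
The plan is to reduce the whole statement to the universal characterisations of pull-back and push-out recorded in Lemmas~\ref{lemma_pullback} and~\ref{lemma_pushout}, so that the topology is never confronted directly: every extension produced below exists in the relevant category by those lemmas, and every identification of two such extensions is obtained from their \emph{uniqueness} clauses. In this way the assignments $[E]\mapsto[Et_1]$ and $[E]\mapsto[t_2E]$ become a formal calculus, exactly as in \cite[Chapter~III]{maclane}, with each rule justified by a uniqueness statement rather than by an explicit construction. First I would check that both maps are well defined on equivalence classes. If $E$ and $E'$ are equivalent via a (necessarily topologically isomorphic) $T\colon X\to X'$ with $T\imath=\imath'$ and $\pi'T=\pi$, then, writing $s'\colon PB'\to X'$ and $r'\colon PB'\to Y_1$ for the structural maps of $E't_1$, the pair $(T^{-1}s',r')$ exhibits $E't_1$ as an extension completing the pull-back diagram of $E$ and $t_1$; by the uniqueness clause of Lemma~\ref{lemma_pullback}, $E't_1$ and $Et_1$ are equivalent. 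Well-definedness of $[E]\mapsto[t_2E]$ is the dual argument via Lemma~\ref{lemma_pushout}.

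Next I would isolate four bookkeeping identities, each a direct consequence of a uniqueness clause: (a) composition of pull-backs, $(E\gamma_1)\gamma_2\cong E(\gamma_1\gamma_2)$; (b) commutation of push-out with pull-back, $(t_2E)t_1\cong t_2(Et_1)$; (c) compatibility with products, $(E_1\times E_2)(t_1\times t_1)\cong(E_1t_1)\times(E_2t_1)$; and (d) naturality of the diagonal and codiagonal, $\Delta_G t_1=(t_1\times t_1)\Delta_{Y_1}$ and $t_2\nabla_H=\nabla_{Y_2}(t_2\times t_2)$, the latter using that $t_2$ is a homomorphism. For (a)--(c) the recipe is the same in each case: write down the obvious structural maps of one side, observe that they complete the defining diagram of the other side, and invoke uniqueness; identity (d) is an immediate computation on points.

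With these in hand, the additivity of pull-back is a single chain of equivalences,
\[
\begin{aligned}
(E_1+E_2)t_1 &=\nabla_H\big((E_1\times E_2)\Delta_G\big)t_1\\
&\cong\nabla_H\big((E_1\times E_2)(\Delta_G t_1)\big)\\
&\cong\nabla_H\big(((E_1t_1)\times(E_2t_1))\Delta_{Y_1}\big)\\
&=E_1t_1+E_2t_1,
\end{aligned}
\]
where (b) moves the outer push-out past $t_1$, then (a) together with (d) rewrites $\Delta_G t_1$, and (c) splits the product; the last line is the Baer sum of $E_1t_1$ and $E_2t_1$ by definition. The statement for $t_2$ is the dual chain, using $t_2\nabla_H=\nabla_{Y_2}(t_2\times t_2)$ where the pull-back computation used the diagonal. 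The topological vector space case is verbatim once every homomorphism is read as a linear map, since Lemmas~\ref{lemma_pullback} and~\ref{lemma_pushout} hold in both categories.

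The hard part will be the identities (a)--(c): one must be careful that the structural maps exhibited genuinely land in the prescribed spaces and commute with the prescribed arrows, so that the uniqueness clauses apply verbatim. The topology itself costs nothing, since pull-backs and push-outs already exist \emph{as extensions} of topological groups (resp.\ t.v.s.) by the cited lemmas, and equivalences of extensions are automatically topological isomorphisms; hence no separate continuity or openness checks are required beyond those the lemmas already provide.
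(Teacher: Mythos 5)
Your proposal is correct and follows essentially the same route as the paper, which simply defers to \cite[Theorem 2.1 of Chapter III]{maclane}: you reconstruct exactly that argument (well-definedness, the composition and commutation identities for pull-backs and push-outs, naturality of $\Delta$ and $\nabla$, and the resulting distributivity of the Baer sum), correctly observing that all topological issues are absorbed by the uniqueness clauses of Lemmas~\ref{lemma_pullback} and~\ref{lemma_pushout}.
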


\section{Properties of the Ext group with respect to  open subgroups and dense subgroups}\label{sect_completions}

\begin{lemma}(\cite[Proposition 3.10]{bcdt})\label{prop_completion_ext}
	Let $E: 0\to H\stackrel{\imath}{\to} X\stackrel{\pi}{\to} G \to 0$ be an  extension of topological abelian groups.  Suppose that the  completion of $H$ is a \v{C}ech-complete group. Then the sequence $\varrho E: 0\to \varrho H\stackrel{\varrho\imath}{\to} \varrho X\stackrel{\varrho\pi}{\to} \varrho G \to 0$ is an extension of topological abelian groups.
\end{lemma}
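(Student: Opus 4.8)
The plan is to reduce the statement to the completeness of a single quotient group by identifying every object with the appropriate closure inside its completion. Throughout I identify $H$ with the subgroup $\imath(H)\le X$; note that this subgroup is closed in $X$, being $\Ker\pi$, and I regard $X$ as a dense subgroup of its completion $\varrho X$.

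First I would dispose of the map $\varrho\imath$, which needs no hypothesis. Since $\imath$ is a topological embedding, $H$ carries the subspace topology inherited from $X$, and hence also from $\varrho X$. The closure $N:=\overline{\imath(H)}^{\,\varrho X}$ is a closed subgroup of the complete group $\varrho X$, so it is complete; moreover $\imath(H)$ is dense in $N$ and induces on it the original topology of $H$. Thus $N$ is a model of the completion $\varrho H$, and $\varrho\imath\colon\varrho H\to\varrho X$ is a topological embedding with closed image $N$. In particular $\varrho\imath$ is injective and open onto its image.

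The heart of the argument is to prove that $\varrho X/N\cong\varrho G$, so that $\varrho\pi$ becomes the corresponding quotient map. Here the hypothesis enters: since $N\cong\varrho H$ is \v{C}ech-complete and $\varrho X$ is complete, I would invoke the fact that the Raikov quotient of a complete group by a closed \v{C}ech-complete subgroup is again complete. One route to this is to use almost metrizability (as in \cite[Section 4.3]{AT}) to produce a compact subgroup $K\le N$ with $N/K$ metrizable, so that $\varrho X/N\cong(\varrho X/K)/(N/K)$ is formed from the complete group $\varrho X/K$ (complete because $K$ is compact) by dividing out the closed, complete, metrizable subgroup $N/K$; the metrizability of $N/K$ is precisely what allows Cauchy filters to be lifted along the quotient map, preserving completeness. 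This is the step I expect to be the main obstacle, and it is exactly where \v{C}ech-completeness is indispensable: for a general closed complete subgroup the quotient of a complete group need not be complete, so no weaker assumption on $\varrho H$ would suffice.

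Granting that $q\colon\varrho X\to\varrho X/N$ is a quotient map onto a complete group, I would finish by identifying $\varrho X/N$ with $\varrho G$. The image $q(X)$ is dense in $\varrho X/N$, since $X$ is dense in $\varrho X$ and $q$ is continuous and surjective; and because $X\cap N=\overline{H}^{\,X}=H$, the restriction $q|_X$ has kernel $H$ and factors through a continuous bijection $G=X/H\to q(X)$. As $\pi$ is a quotient map, the assignment $\pi(x)\mapsto q(x)$ defines a continuous homomorphism $s_0\colon G\to\varrho X/N$ (its composite with $\pi$ is the continuous map $q|_X$), which extends by completeness to a continuous $s\colon\varrho G\to\varrho X/N$. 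Since $N\subseteq\Ker\varrho\pi$, the map $\varrho\pi$ factors as $\varrho\pi=\bar\pi\circ q$ for a continuous $\bar\pi\colon\varrho X/N\to\varrho G$, and one checks that $\bar\pi\circ s=\id$ on the dense subgroup $G$ and $s\circ\bar\pi=\id$ on the dense subgroup $q(X)$, hence everywhere; thus $\bar\pi$ is a topological isomorphism. Consequently $\varrho\pi=\bar\pi\circ q$ is surjective and open, and $\Ker\varrho\pi=\Ker q=N=\im\varrho\imath$, which together with the second paragraph shows that $\varrho E$ is an extension of topological abelian groups.
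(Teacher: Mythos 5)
The paper does not actually prove this lemma: it is imported verbatim from \cite[Proposition 3.10]{bcdt}, so there is no in-paper argument to measure you against. Judged on its own terms, your proof is structurally sound and, as far as I can tell, follows the same strategy as the cited source: realize $\varrho H$ as the closure $N$ of $\imath(H)$ in $\varrho X$, prove that $\varrho X/N$ is complete, and then identify $\varrho X/N$ with $\varrho G$ by extending maps over dense subgroups. Your device of producing the two-sided inverse $\bar\pi$ neatly sidesteps a direct verification that $G\to q(X)$ is open onto its image (which would otherwise need the density of $H$ in $N$), and the reduction of the completeness of $\varrho X/N$ to the compact and metrizable cases via the feathered structure of \v{C}ech-complete groups (\cite[Section 4.3]{AT}) is exactly the right move; you are also right that this is where the hypothesis on $\varrho H$ is indispensable.

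The one soft spot is that this load-bearing step is asserted rather than proved: ``the metrizability of $N/K$ is precisely what allows Cauchy filters to be lifted'' is a slogan, not an argument, and lifting Cauchy filters along quotient maps is precisely the kind of operation that fails without such hypotheses. The two facts you need --- that the quotient of a complete group by a compact subgroup is complete, and that the quotient of a complete group by a closed metrizable subgroup is complete --- are both true and standard (see the material on completeness of quotients in \cite{RD}), so this is a citation gap rather than a mathematical one; but in a self-contained write-up you should either carry out the filter-lifting argument or point to a precise statement, since it constitutes essentially the entire content of the lemma.
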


\begin{theorem}\label{prop_ext_completions_iso}
	Let $G, H$ be topological Abelian groups. If   $H$ is  \v Cech-complete then $\ext (G,H)\cong \ext (\varrho G,  H)$.
\end{theorem}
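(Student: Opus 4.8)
The plan is to show that pulling back along the canonical dense embedding $\jmath\colon G\to\varrho G$ furnishes the desired isomorphism. Write
\[
\Phi\colon\ext(\varrho G,H)\longrightarrow\ext(G,H),\qquad [E]\longmapsto[E\jmath];
\]
by Lemma \ref{lemm_homo} this is a homomorphism of abelian groups, so it suffices to prove that it is bijective. Before doing that I would record a key preliminary observation: \emph{if $E\colon 0\to H\stackrel{\imath}{\to} X\stackrel{\pi}{\to}\varrho G\to 0$ is any extension of $H$ by $\varrho G$, then $X$ is complete.} Indeed, since $H$ is \v Cech-complete we have $\varrho H=H$, and since $\varrho(\varrho G)=\varrho G$, Lemma \ref{prop_completion_ext} shows that $\varrho E\colon 0\to H\to\varrho X\to\varrho G\to 0$ is again an extension of $H$ by $\varrho G$. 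The canonical completion morphism $E\to\varrho E$ is the identity on the two outer terms, so by the rigidity fact recorded after the equivalence diagram in the preliminaries (every morphism of extensions which is the identity on the ends is a topological isomorphism) the middle map $\iota_X\colon X\to\varrho X$ is a topological isomorphism; hence $X$ is complete.

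For injectivity, suppose $[E]$ lies in the kernel, i.e. $E\jmath$ splits. Identifying the pull-back $PB$ with $\pi^{-1}(\jmath(G))$ (the projection $(x,g)\mapsto x$ is a topological isomorphism onto this subgroup because $\jmath$ is an embedding), a splitting of $E\jmath$ yields a continuous homomorphism $\sigma\colon G\to X$ with $\pi\circ\sigma=\jmath$. Since $G$ is dense in $\varrho G$ and $X$ is complete, $\sigma$ extends uniquely to a continuous homomorphism $\tilde\sigma\colon\varrho G\to X$. The continuous maps $\pi\circ\tilde\sigma$ and $\id_{\varrho G}$ agree on the dense subgroup $G$, hence coincide, so $\tilde\sigma$ is a continuous section of $\pi$ and $E$ splits by Proposition \ref{theorem_section}. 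Thus $\Phi$ is injective.

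For surjectivity, take an arbitrary $E'\colon 0\to H\stackrel{\imath'}{\to} X'\stackrel{\pi'}{\to}G\to 0$. As above, Lemma \ref{prop_completion_ext} produces the extension $\varrho E'\colon 0\to H\to\varrho X'\to\varrho G\to 0$ of $H$ by $\varrho G$, and I claim $\Phi([\varrho E'])=[E']$. To see this I would invoke the uniqueness clause of Lemma \ref{lemma_pullback}: the completion embedding $\iota_{X'}\colon X'\to\varrho X'$, together with $\id_H$ and $\jmath$, fits $E'$ into the pull-back diagram of $\varrho\pi'$ and $\jmath$, the two required commutativities $\iota_{X'}\circ\imath'=\varrho\imath'$ and $\varrho\pi'\circ\iota_{X'}=\jmath\circ\pi'$ being precisely the naturality of completion. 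Hence $E'$ is equivalent to $(\varrho E')\jmath$, so $[E']$ lies in the image of $\Phi$, and $\Phi$ is surjective.

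The invocations of Lemma \ref{lemm_homo}, of the universal property of the completion, and of the uniqueness of the pull-back are routine; the point that must be handled carefully is the preliminary completeness claim for the middle term $X$, on which both the extension of $\sigma$ in the injectivity argument and the identification $\varrho H=H$ in the surjectivity argument ultimately rest. I would be especially careful that the completion morphism $E\to\varrho E$ genuinely has \emph{identical} outer terms $H$ and $\varrho G$, since it is exactly this coincidence that lets the cited rigidity of extension morphisms upgrade $\iota_X$ to a topological isomorphism.
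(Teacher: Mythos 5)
Your proposal is correct, and its overall skeleton (the pull-back homomorphism along $G\hookrightarrow\varrho G$, injectivity plus surjectivity, and the use of Lemma \ref{prop_completion_ext} for surjectivity) coincides with the paper's. The genuine difference is in the injectivity step. The paper works with the \emph{left} inverse: it identifies $E\jmath$ with the restricted extension $0\to H\to\pi^{-1}(G)\to G\to 0$, obtains a retraction $P\colon\pi^{-1}(G)\to H$, and extends $P$ over the dense subgroup $\pi^{-1}(G)\subseteq X$ into the \emph{complete} target $H$; completeness of $H$ is all that is needed, so no information about $X$ is required. You instead work with the \emph{right} inverse: you extend the section $\sigma\colon G\to X$ over the dense inclusion $G\subseteq\varrho G$, which forces you to first prove that the middle term $X$ of any extension of $H$ by $\varrho G$ is complete. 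Your argument for that preliminary claim is a nice and valid one -- applying Lemma \ref{prop_completion_ext} to $E$ itself and invoking the rigidity of extension morphisms to conclude that $X\to\varrho X$ is a topological isomorphism -- and it yields a fact of independent interest (completeness is inherited by the middle term when both ends are complete and $H$ is \v Cech-complete). The trade-off is that your route is slightly longer and leans on the rigidity lemma, whereas the paper's choice of the left-inverse characterization in Proposition \ref{theorem_section} sidesteps the completeness of $X$ entirely. Both arguments are sound.
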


\begin{proof}
	Consider  the canonical inclusion $\mathcal I: G\to \varrho G$. According to Lemma \ref{lemm_homo} the map
	\[
	\begin{array}{cccc}
	\phi:&\ext (\varrho G,H)&\longrightarrow & \ext(  G,H)\\
	&\left[E \right]        &\longmapsto     & \left[  E\mathcal I\right]
	\end{array}
	\]
	is a homomorphism of abelian groups. Let us see that $\phi$ is an isomorphism.	
	
	To prove that $\phi$ is injective, pick  an extension of topological abelian groups $E :0\to H\stackrel{\imath }{\to} X\stackrel{\pi }{\to}\varrho G\to 0 $ and  suppose that $ E\mathcal I $ splits. 
	An easy verification shows that the sequence $E':0\to   H\stackrel{  {\imath }}{\to}   {\pi^{-1}(G)} \stackrel{ \pi_{|\pi^{-1}(G)} }{\to}  G\to 0 $  is an extension of topological abelian groups. Furthermore the following diagram is commutative
	\[
	\xymatrix{
		E:&	0\ar[r]       &H\ar[r]^\imath   &X\ar[r]^\pi        &\varrho G \ar[r]  &0\\
		E':&	0\ar[r]   &H \ar@{=}[u]\ar[r]^\imath           &\pi^{-1}(G)\ar[r]_{ \;\;\;\;\;\pi_{|\pi^{-1}(G)}}\ar@{^{(}->}[u]                 &G\ar@{^{(}->}[u]^{\mathcal I}\ar[r]  &0
	}
	\]
	According to Lemma \ref{lemma_pullback}, $E'$ must be equivalent to $E\mathcal I$. Then $E'$ splits and applying Proposition \ref{theorem_section} we find a continuous homomorphism $P: \pi^{-1}(G)\to   H$ such that $P\circ   \imath = \id_{{H}}$.
	Since $G$ is dense in $\varrho G$, it is clear  that $\pi^{-1}(G)$ is dense in $X$. Call $R:X\to H$  the canonical extension of $P$ to $X$. $R$ is a continuous homomorphism satisfying $R\circ \imath=\id_H$, hence by Proposition \ref{theorem_section}, $E$ splits.
	
	To check that $\phi$ is onto, choose  an extension of topological abelian groups $\mathcal E   :0\to H\stackrel{I }{\to} Y\stackrel{p }{\to}  G\to 0 $. By Lemma \ref{prop_completion_ext} the sequence $\varrho \mathcal E: 0\to  H\stackrel{\varrho I}{\to} \varrho Y\stackrel{ \varrho p}{\to} \varrho G \to 0$ is an extension of topological abelian groups. The following diagram is commutative
	\[
	\xymatrix{
		\varrho\mathcal	E:&	0\ar[r]       &H\ar[r]^{\varrho I}   &\varrho Y\ar[r]^{\varrho p}        &\varrho G \ar[r]  &0\\
		\mathcal E:&	0\ar[r]   &H \ar@{=}[u]\ar[r]^{  I}           &  Y\ar[r]^{  p}\ar@{^{(}->}[u]                 &G\ar@{^{(}->}[u]^{\mathcal I}\ar[r]  &0
	}
	\]
	
	In virtue of   Lemma \ref{lemma_pullback}, $E$ must be equivalent to $(\varrho \mathcal E)\mathcal I$  hence $\phi(\left[\varrho\mathcal E \right])= \left[\mathcal E\right]$
\end{proof}

\begin{theorem}\label{theo_dense}
	Let $G$ be a topological abelian group and let $H$ be a  \v Cech-complete  topological abelian group. Suppose that $D$ is a dense subgroup of $G$. Then $\ext (G,H)\cong \ext(D,H)$.
\end{theorem}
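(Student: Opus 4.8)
The plan is to reduce the statement to Theorem \ref{prop_ext_completions_iso} by exploiting the fact that a dense subgroup and its ambient group have the same completion. First I would observe that since $D$ is dense in $G$ and $G$ is dense in its completion $\varrho G$, the subgroup $D$ is dense in the complete group $\varrho G$; as the uniformity induced on $D$ by $\varrho G$ coincides with the one inherited from $G$, the group $\varrho G$ is a completion of $D$. By uniqueness of completions this yields a canonical topological isomorphism $\varrho D\cong\varrho G$ under which the canonical inclusion $\mathcal I_D:D\to\varrho D$ is identified with the composite $D\hookrightarrow G\stackrel{\mathcal I}{\to}\varrho G$.

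Since $H$ is \v Cech-complete, Theorem \ref{prop_ext_completions_iso} applies both to $G$ and to $D$, giving isomorphisms $\ext(G,H)\cong\ext(\varrho G,H)$ and $\ext(D,H)\cong\ext(\varrho D,H)$. Combining these with the identification $\varrho D\cong\varrho G$ from the previous paragraph, one obtains
\[
\ext(G,H)\cong\ext(\varrho G,H)\cong\ext(\varrho D,H)\cong\ext(D,H),
\]
which is the desired conclusion. Tracing through the isomorphism $\phi$ of Theorem \ref{prop_ext_completions_iso} (pull-back along the completion inclusion) together with the identification of $\mathcal I_D$ with $\mathcal I$ restricted to $D$, the resulting map $\ext(G,H)\to\ext(D,H)$ is simply $[E]\mapsto[Ej]$, the pull-back along the inclusion $j:D\hookrightarrow G$, so the isomorphism is natural and explicit.

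The only real content beyond Theorem \ref{prop_ext_completions_iso} is the equality of completions $\varrho D=\varrho G$, which is elementary; thus I expect no serious obstacle in the chained approach. If instead one prefers a self-contained argument mirroring the proof of Theorem \ref{prop_ext_completions_iso}, the map to analyse is the restriction $[E]\mapsto[Ej]$. Injectivity goes exactly as there: if $Ej$ splits, a left inverse $P$ for the inclusion $\imath$ is defined on the dense subgroup $\pi^{-1}(D)$ of $X$, and since $H$ is complete it extends to a continuous left inverse $R:X\to H$ with $R\circ\imath=\id_H$, so $E$ splits. Surjectivity is the step where completeness is genuinely used, and hence the main thing to get right: given $\mathcal E:0\to H\to Y\to D\to 0$, one completes it via Lemma \ref{prop_completion_ext} to $\varrho\mathcal E:0\to H\to\varrho Y\to\varrho D\to 0$, uses $\varrho D=\varrho G$ to pull back along $\mathcal I:G\to\varrho G$, and checks (as in the surjectivity half of the earlier proof) that the pull-back $E:=(\varrho\mathcal E)\mathcal I$ satisfies $Ej\cong\mathcal E$. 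This surjectivity step is a direct transcription of the previous argument, so either route completes the proof.
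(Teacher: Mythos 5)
Your proposal is correct and follows exactly the paper's own argument: the paper likewise notes that $\varrho D=\varrho G$ because $D$ is dense in $G$, and then chains the isomorphisms $\ext(D,H)\cong\ext(\varrho D,H)=\ext(\varrho G,H)\cong\ext(G,H)$ via Theorem \ref{prop_ext_completions_iso}. The additional remarks on the explicit form of the isomorphism and the self-contained alternative are sound but not needed.
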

\begin{proof}
	If $D$ is dense in then $G$ $\varrho D = \varrho G$. By Theorem \ref{prop_ext_completions_iso}
	\[
	\ext (D,H)\cong \ext (\varrho D,   H)= \ext (\varrho G,   H)\cong\ext (G, H).
	\]
\end{proof}

In theorems 3.5 and 3.6 of \cite{fulp} the authors study situations in which $\ext (G,X)=0$ for a fixed $G\in \mathcal L$ and $X$ varying in a subclass of $\mathcal L$. The following corollary is an application of Theorem \ref{prop_ext_completions_iso} on these results.

\begin{corollary}\label{theo_base}
	Let  $G$  be  a locally precompact abelian group.
	\begin{enumerate}[(i)]
		\item $\ext (G, X) = 0$ for all   totally disconnected $X\in \mathcal L$  if and only if $\varrho{G}=(\bigoplus_{\alpha<\kappa} \Z)\times \R^n$ for some $n\in \omega$ and  an arbitrary ordinal number $\kappa$.
		\item $\ext (G,X) = 0$ for all connected $X\in \mathcal L$ if and only if $\varrho{G}=\R^n\times G'$ where  $n\in \Z^{+}$ and $G'$ contains a compact open subgroup having a co-torsion dual. 
 
	\end{enumerate}
\end{corollary}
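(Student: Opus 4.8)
The plan is to deduce the corollary from the Fulp--Griffith structure theorems \cite[Theorems 3.5 and 3.6]{fulp} by transporting the vanishing of $\ext(G,-)$ from $G$ to its completion $\varrho G$. Since $G$ is locally precompact, $\varrho G$ is locally compact, i.e. $\varrho G\in\mathcal L$. Moreover every $X\in\mathcal L$ is \v{C}ech-complete, so Theorem \ref{prop_ext_completions_iso} applies with $H=X$ and yields an isomorphism $\ext(G,X)\cong\ext(\varrho G,X)$; in particular $\ext(G,X)=0$ if and only if $\ext(\varrho G,X)=0$. Thus the two hypotheses in (i) and (ii) are equivalent to the same statements with $G$ replaced by the locally compact group $\varrho G$.

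Next I would identify the Ext group computed in the category of all topological abelian groups with the one computed inside $\mathcal L$. This rests on the fact that local compactness is a three space property: if $0\to X\to Y\to \varrho G\to 0$ is an extension of topological abelian groups with $X,\varrho G\in\mathcal L$, then $X$ is complete, so the image of $X$ is closed in $Y$, and the quotient $Y/X\cong\varrho G$ is locally compact; a closed subgroup together with the corresponding quotient being locally compact forces $Y$ to be locally compact. Consequently every extension of $\varrho G$ by $X$ already lies in $\mathcal L$, the notion of equivalence is unchanged, and $\ext(\varrho G,X)=\extl(\varrho G,X)$.

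Combining the two reductions, the condition in (i) becomes $\extl(\varrho G,X)=0$ for every totally disconnected $X\in\mathcal L$, which by \cite[Theorem 3.5]{fulp} holds precisely when $\varrho G=(\bigoplus_{\alpha<\kappa}\Z)\times\R^n$; the condition in (ii) becomes $\extl(\varrho G,X)=0$ for every connected $X\in\mathcal L$, which by \cite[Theorem 3.6]{fulp} holds precisely when $\varrho G=\R^n\times G'$ with $n\in\Z^+$ and $G'$ containing a compact open subgroup with co-torsion dual. This gives both equivalences.

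The routine parts are the invocation of Theorem \ref{prop_ext_completions_iso} and the citation of the Fulp--Griffith theorems. The step that needs genuine care, and which I expect to be the main obstacle, is the identification $\ext(\varrho G,X)=\extl(\varrho G,X)$: one must check that no extension by a larger, possibly non locally compact, middle group is lost, i.e. establish the three space property of local compactness for these exact sequences, and verify that the equivalence relation on extensions is unchanged when the ambient category is enlarged. Once this identification is secured, the structural classification is exactly Fulp and Griffith's, now applied to $\varrho G$ rather than to $G$ itself.
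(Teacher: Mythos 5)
Your proposal is correct and follows essentially the same route as the paper: both directions of each equivalence are obtained by transporting the vanishing of $\ext(G,X)$ to $\ext(\varrho G,X)$ via Theorem \ref{prop_ext_completions_iso} (using that every $X\in\mathcal L$ is \v Cech-complete and that $\varrho G\in\mathcal L$ for locally precompact $G$) and then invoking \cite[Theorems 3.5 and 3.6]{fulp}. The only difference is that you explicitly justify the identification of $\ext(\varrho G,X)$ with the Ext group computed inside $\mathcal L$ via the three-space property of local compactness, a point the paper's proof leaves implicit; your argument for it is sound.
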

\begin{proof}
	(i) Suppose that a locally precompact abelian group $G$ has the property that $\ext (G, X) = 0$  for all   totally disconnected $X\in \mathcal L$. Since ever group in $\mathcal L$ is \v Cech-complete, by Theorem \ref{prop_ext_completions_iso} $\ext (\varrho G, X) = \ext (G, X)=0$ for all totally disconnected $X\in \mathcal L$. By  \cite[Theorem 3.5]{fulp},  $\varrho{G}=(\bigoplus_\alpha \Z)\times \R^n$ for some $n\in \omega$ and $\alpha$ an ordinal number. Conversely if $\varrho{G}=(\bigoplus_\alpha \Z)\times \R^n$ by  \cite[Theorem 3.5]{fulp},  $\ext (\varrho G, X) = 0$ for all totally disconnected $X\in \mathcal L$. Using Theorem \ref{prop_ext_completions_iso} we see that $\ext(G,X)=\ext (\varrho G, X) = 0$ for all totally disconnected $X\in \mathcal L$. 
	
	(ii) Apply the same argument to \cite[Theorem  3.6]{fulp}.
  
\end{proof}

In the following examples, we compute several Ext groups:

\begin{example}\label{exam_adele}
	Consider $\Q_d$ the rational numbers with the discrete topology and $D$ any dense subgroup of $\T$. Denote by 
	$\Sigma_a$ the $a$-adic solenoid that satisfies $\Q_d^\wedge \cong \Sigma_a$ (see \cite[25.4]{hr}).
	According to \cite[Exercise 51.7]{fuch1}, $\ext(\Q_d, \Z)\cong \Q^\omega$. By \cite[Theorem 2.12]{fulp},
	$\ext(\Q_d, \Z)\cong \ext(\Z^\wedge, \Q_d^\wedge)\cong \ext (\T,   \Sigma_a)$. Finally, in virtue of  Theorem \ref{theo_dense},
	$\ext (D, \Sigma_a)\cong \Q^\omega$.
\end{example}

\begin{example}
	Let $G$ be a product of locally precompact abelian torsion groups. Consider $0\to \Z\to \R\to \T\to 0$ the canonical extension of $\T$ by $\Z$. It is easy to see that \cite[Theorem 2.14]{fulp} is also valid for topological abelian groups outside the class $\mathcal L$, hence we can consider the classical Hom-Ext exact sequence of abelian groups in this context
	\begin{align*}
		0\to \text{CHom} (G,\Z)\to \text{CHom} (G,\R)&\to \text{CHom}(G,\T)\\
		&\to \ext (G, \Z) \to \ext (G, \R) 
	\end{align*}
	   $\text{CHom} (G,\R)=0$ because $G$ is a torsion group. According to \cite[Corollary 3.15]{bcdt}, since $G$ is a product of locally precompact abelian groups, $\ext(G, \R)=0$. Then $\ext(G,\Z)\cong \text{CHom}(G,\T)$. If we only ask $G$ to be a  topological abelian torsion group, the same argument shows that $\ext(G,\Z)$ contains $\text{CHom}(G,\T)$ as a subgroup.
\end{example}

\begin{theorem}\label{prop_completions_tvs}
	Let $Y, Z$ be topological vector spaces. If $Y$ is complete and metrizable then $\extv (Z,Y)\cong \extv (\varrho Z,  Y)$.
\end{theorem}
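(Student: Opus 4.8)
The plan is to follow the proof of Theorem~\ref{prop_ext_completions_iso} almost verbatim, now working throughout in the category of topological vector spaces and checking that every map produced by the group-theoretic argument is in fact linear. The first observation is that a complete metrizable topological vector space is \v{C}ech-complete: metrizability makes it almost metrizable, and together with completeness this yields \v{C}ech-completeness by \cite[Theorem 4.3.15]{AT}. Since $Y$ is complete we also have $Y=\varrho Y$. These two facts let me invoke the group-level results of this section on the underlying topological abelian groups. Consider the canonical linear inclusion $\mathcal I:Z\to\varrho Z$; by the t.v.s.\ version of Lemma~\ref{lemm_homo} the pull-back assignment
\[
\phi:\extv(\varrho Z,Y)\longrightarrow\extv(Z,Y),\qquad [E]\longmapsto[E\mathcal I]
\]
is a homomorphism of abelian groups, and it remains to show that it is a bijection.

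For injectivity I would take a t.v.s.\ extension $E:0\to Y\stackrel{\imath}{\to} X\stackrel{\pi}{\to}\varrho Z\to 0$ with $E\mathcal I$ split and form $E':0\to Y\to\pi^{-1}(Z)\to Z\to 0$, which is again a t.v.s.\ extension because $\pi^{-1}(Z)$ is a linear subspace of $X$. Exactly as in Theorem~\ref{prop_ext_completions_iso}, the t.v.s.\ version of Lemma~\ref{lemma_pullback} identifies $E'$ with $E\mathcal I$, so $E'$ splits and yields a continuous linear left inverse $P:\pi^{-1}(Z)\to Y$ of the inclusion. Since $Z$ is dense in $\varrho Z$, the subspace $\pi^{-1}(Z)$ is dense in $X$; as $P$ is uniformly continuous and $Y$ is complete, $P$ extends to a continuous map $R:X\to Y$. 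The point that goes beyond the group argument is that $R$ is linear: additivity and real homogeneity hold on the dense subspace $\pi^{-1}(Z)$ and are preserved by passage to the continuous extension. Then $R\circ\imath=\id_Y$, so by Proposition~\ref{theorem_section} the extension $E$ splits.

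For surjectivity I would start from a t.v.s.\ extension $\mathcal E:0\to Y\to W\to Z\to 0$ and pass to its completion. Because the completion of a topological vector space is again a topological vector space and completions of continuous linear maps are continuous and linear, the sequence $\varrho\mathcal E:0\to Y\to\varrho W\to\varrho Z\to 0$ consists of topological vector spaces and linear maps; that it is exact with open maps, hence a genuine t.v.s.\ extension, follows by applying Lemma~\ref{prop_completion_ext} to the underlying groups (legitimate since $Y$ is \v{C}ech-complete). The commutative diagram relating $\mathcal E$ to $\varrho\mathcal E$, together with the t.v.s.\ version of Lemma~\ref{lemma_pullback}, then shows that $\mathcal E$ is equivalent to $(\varrho\mathcal E)\mathcal I$, whence $\phi([\varrho\mathcal E])=[\mathcal E]$.

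The main obstacle, and the only place where real work beyond transcribing the group proof is required, is keeping everything inside the t.v.s.\ category: verifying that the continuous extension $R$ of the left inverse remains linear, and that the group-completion $\varrho\mathcal E$ carries a compatible vector space structure making it a t.v.s.\ extension rather than merely an extension of topological abelian groups. Both reduce to the general principle that the vector space operations, being continuous, extend by density to completions and are respected by continuous extensions of linear maps defined on dense subspaces.
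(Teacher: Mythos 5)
Your proposal is correct and follows essentially the same route as the paper, which itself simply observes that $Y$ complete and metrizable is \v{C}ech-complete, that Lemma~\ref{prop_completion_ext} applied to a t.v.s.\ extension yields a t.v.s.\ extension, and then repeats the proof of Theorem~\ref{prop_ext_completions_iso}. You have merely made explicit the verifications the paper leaves implicit (linearity of the extended left inverse by density, compatibility of the vector space structure on the completion), all of which are sound.
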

\begin{proof}
	Since $Y$ is complete and metrizable, it is \v Cech-complete and we are in the conditions of Lemma \ref{prop_completion_ext}.
	Notice that if we use Lemma \ref{prop_completion_ext} to complete an extension of topological vector spaces we obtain an extension of topological vector spaces. Having this in mind we can repeat the proof of Theorem \ref{prop_ext_completions_iso} in this context.
\end{proof}

The following result generalizes  \cite[Proposition 4.2]{domanski}:
\begin{theorem}\label{theo_dense_tvs}
	Let $Z$ be a topological vector space and let $Y$ be a complete metrizable   topological vector space. If $D$ is a dense subspace of $Z$ then $\extv (Z,Y)\cong \extv(D,Y)$
\end{theorem}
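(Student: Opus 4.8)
The plan is to reduce this statement to the completion theorem for topological vector spaces exactly as the analogous group-theoretic result (Theorem \ref{theo_dense}) was reduced to Theorem \ref{prop_ext_completions_iso}. The first and essential step would be to observe that, since $D$ is a dense linear subspace of $Z$, the two spaces share the same completion, that is $\varrho D = \varrho Z$, now understood in the category of topological vector spaces rather than merely of topological groups. This is the only place where density is used. The completion of a topological vector space carries a canonical topological vector space structure (scalar multiplication extends continuously to $\varrho Z$), and because $D$ sits densely inside $Z$ the uniform structures, and hence the completions, coincide, with the linear operations on $\varrho D$ agreeing with those on $\varrho Z$ under the canonical identification.

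Granting this identification, the conclusion follows by a two-step chain. Since $Y$ is complete and metrizable, Theorem \ref{prop_completions_tvs} applies both to $Z$ and to $D$, yielding $\extv(Z,Y)\cong\extv(\varrho Z,Y)$ and $\extv(D,Y)\cong\extv(\varrho D,Y)$. Combining these isomorphisms with the equality $\varrho D=\varrho Z$ gives
\[
\extv (D,Y)\cong \extv (\varrho D, Y)= \extv (\varrho Z, Y)\cong\extv (Z, Y),
\]
which is precisely the desired isomorphism.

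The main obstacle, though it is more a point requiring care than a genuine difficulty, is verifying that the equality $\varrho D=\varrho Z$ holds at the level of topological vector spaces, so that Theorem \ref{prop_completions_tvs} may legitimately be invoked at both ends of the chain. Once one notes that the hypotheses of that theorem (completeness and metrizability of $Y$) are symmetric in $Z$ and $D$, and that the procedure of Lemma \ref{prop_completion_ext} produces extensions of topological vector spaces when it is fed extensions of topological vector spaces (as was already remarked in the proof of Theorem \ref{prop_completions_tvs}), no further work is required; the argument is then formally identical to the group-theoretic case of Theorem \ref{theo_dense}.
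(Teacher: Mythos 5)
Your proposal is correct and follows the same route as the paper: the paper's proof simply says to proceed as in Theorem \ref{theo_dense}, replacing Theorem \ref{prop_ext_completions_iso} with Theorem \ref{prop_completions_tvs}, which is exactly the identification $\varrho D=\varrho Z$ followed by the two-step chain of isomorphisms you write down. Your additional care about the completion being taken in the category of topological vector spaces is a reasonable elaboration of a point the paper leaves implicit.
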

\begin{proof}
	Proceed as in Theorem \ref{theo_dense}   using Theorem \ref{prop_completions_tvs} instead of Theorem \ref{prop_ext_completions_iso}.	
\end{proof}

\begin{lemma}\label{lemm_over} Let $A$ be an open subgroup of a topological group $G$ and suppose that an extension of topological abelian groups
	$E:  0\to H\stackrel{\imath}{\to} X \stackrel{\pi}{\to}A\to 0$
	splits algebraically. Then there exists a group topology $\tau$ on $H\times G$ and an embedding $f:X\to (H\times G,\tau)$ making commutative the diagram
	\begin{equation*}
		\xymatrix{
			\overline E:&	0\ar[r]   &H\ar[r]^{\imath_\tau}\ar@{=}[d]   &(H\times G,\tau)\ar[r]^{\pi_\tau}         &G\ar[r]  &0\\
			E &	0\ar[r]   &H\ar[r]^{\imath}           &X\ar[u]^f\ar[r]^{\pi}                &A\ar@{^{(}->}[u]\ar[r]  &0
		}
	\end{equation*}
	where $\imath_\tau$ and $\pi_\tau$ are the canonical mappings and $\overline E$ is an extension of topological abelian groups.
\end{lemma}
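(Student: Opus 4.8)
The plan is to transport the topology of $X$ onto the abstract group $H\times A$ by means of the algebraic splitting, and then to extend this topology from the open subgroup $H\times A$ to all of $H\times G$ by the standard device of prescribing a neighbourhood base at the identity. First I would fix an algebraic section $s:A\to X$ (it exists since $E$ splits algebraically) and use the abstract group isomorphism $H\times A\to X$, $(h,a)\mapsto \imath(h)+s(a)$, to carry the group topology of $X$ to a group topology $\sigma$ on $H\times A$. Under this identification $\imath(h)=(h,0)$ and $\pi(h,a)=a$, so $(H\times A,\sigma)$ \emph{is} $X$ as a topological group, with $\imath$ the embedding of the first factor and $\pi$ the continuous open projection onto $A\hookrightarrow G$.

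Next I would regard $H\times A$ as a subgroup of the abstract group $H\times G$ and define $\tau$ by declaring some base $\mathcal B$ of $\sigma$-neighbourhoods of $0$ to be a neighbourhood base at $0$ of $H\times G$. Since every member of $\mathcal B$ is contained in $H\times A$ and $(H\times A,\sigma)$ is a topological group, the family $\mathcal B$ satisfies the usual neighbourhood axioms for a group topology: given $U\in\mathcal B$ one chooses $V\in\mathcal B$ with $V-V\subseteq U$, and conjugation-invariance is automatic because the groups are abelian. Thus $\tau$ is a group topology on $H\times G$; it is Hausdorff because $\bigcap\mathcal B=\{0\}$ by Hausdorffness of $\sigma$; and $H\times A$ is a $\tau$-open subgroup whose $\tau$-subspace topology is exactly $\sigma$. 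I would then take $f$ to be the composite of the isomorphism $X\cong(H\times A,\sigma)$ with the inclusion $(H\times A,\sigma)\hookrightarrow(H\times G,\tau)$, so that $f$ is an embedding onto the $\tau$-open subgroup $H\times A$.

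It remains to verify that $\overline E$ is an extension and that the diagram commutes. Commutativity follows at once from the formulas $\imath_\tau(h)=(h,0)$ and $\pi_\tau(h,g)=g$ for the canonical maps: one checks $f\circ\imath=\imath_\tau$ and $\pi_\tau\circ f=\mathcal I\circ\pi$, where $\mathcal I:A\hookrightarrow G$ is the inclusion. The map $\imath_\tau$ is an embedding because, $H\times A$ being $\tau$-open, the $\tau$-subspace topology on $H\times\{0\}$ coincides with its $\sigma$-subspace topology, namely the topology of $H$. The map $\pi_\tau$ is continuous because for each $W\in\mathcal N_0(G)$ the set $\pi_\tau^{-1}(W)=H\times W$ contains $\pi^{-1}(W\cap A)$, which is a $\sigma$-neighbourhood of $0$ by continuity of $\pi$.

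The crux of the argument, and the only place where the hypothesis enters, is the openness of $\pi_\tau$. For $U\in\mathcal B$ one has $\pi_\tau(U)=\pi(U)$, which is a neighbourhood of $0$ in $A$ because $\pi$ is open; and precisely because $A$ is \emph{open} in $G$, a neighbourhood of $0$ in $A$ is again a neighbourhood of $0$ in $G$. Hence $\pi_\tau$ is open onto $G$, and $\overline E$ is an extension of topological abelian groups. I expect the main obstacle to be checking this openness of $\pi_\tau$: were $A$ merely dense or arbitrary rather than open, $\pi(U)$ would fail to be a $G$-neighbourhood of $0$ and the construction would break down, so the argument is genuinely tied to $A$ being open.
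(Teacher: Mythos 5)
Your proposal is correct and follows essentially the same route as the paper: transport the topology of $X$ to $H\times A$ via the algebraic splitting, declare $(H\times A,\sigma)$ an open subgroup of $H\times G$ to obtain $\tau$, and take $f$ to be the resulting composite embedding. The paper compresses your neighbourhood-base verifications and the openness of $\pi_\tau$ into the phrase ``an easy verification shows,'' but the construction and the role of the hypothesis that $A$ is open in $G$ are identical.
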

\begin{proof}
	Since $E$ splits algebraically there exists a group topology $\tau'$ on $H\times A$ such that $E$ is equivalent to the extension of topological abelian groups  $\mathcal E: 0\to H\stackrel{\imath_{\tau'}}{\to} (H\times A,\tau') \stackrel{\pi_{\tau'}}{\to}A\to 0$ where $\imath_{\tau'}$ and $\pi_{\tau'}$ are respectively the canonical inclusion and the canonical projection. Call $T$   the topological isomorphism  making the following diagram commutative
	
	\begin{equation}\label{equa_nose}
		\xymatrix{
			\mathcal E:&	0\ar[r]   &H \ar@{=}[d]\ar[r]^{\imath_{\tau'}\;\;\;\;\;}           &(H\times A,\tau')\ar[r]^{\;\;\;\;\;\pi_{\tau'}}                 &A\ar@{=}[d]\ar[r]  &0\\
			E:&	0\ar[r]       &H\ar[r]^\imath   &X\ar[r]^\pi \ar[u]^T        &  A \ar[r]  &0
		}
	\end{equation}
	
	Now, consider on $H\times G$ the group topology $\tau$ obtained by declaring $(H\times A,\tau')$  an open subgroup. An easy verification shows that if we call $\imath_\tau: H\to (H\times G,\tau)$  the canonical inclusion and $\pi_\tau: (H\times G,\tau)\to G$  the canonical projection, the sequence $\overline E: 0\to H\stackrel{\imath_{\tau}}{\to} (H\times G,\tau) \stackrel{\pi_{\tau}}{\to}G\to 0$ is an extension of topological abelian groups. Combining the commutative diagram
	
	\begin{equation*}
		\xymatrix{
			\overline E:&	0\ar[r]   &H\ar[r]^{\imath_\tau\;\;\;\;\;}\ar@{=}[d]   &(H\times G,\tau)\ar[r]^{\pi_\tau}         &G\ar[r]  &0\\
			\mathcal E &	0\ar[r]   &H\ar[r]^{\imath_{\tau'}\;\;\;\;\;}           &(H\times A,\tau')\ar[r]^{\;\;\;\;\;\pi_{\tau'}}\ar@{^{(}->}[u]                 &A\ar@{^{(}->}[u]\ar[r]  &0
		}
	\end{equation*}
	with (\ref{equa_nose})  and defining $f$ as the composition of $T$ and the inclusion $(H\times A,\tau')\hookrightarrow (H\times G,\tau)$, we complete the proof.

\end{proof}

\begin{theorem}\label{teor_open}
	Let $G, H$ be   topological abelian groups. Suppose that $H$ is divisible and that $A$ is an open subgroup of $G$. Then $\ext (G,H)\cong \ext (A, H)$.
\end{theorem}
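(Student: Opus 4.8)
The plan is to show that the restriction (pull-back) map along the inclusion $A\hookrightarrow G$ is an isomorphism, exploiting the divisibility of $H$ in two distinct ways: to extend homomorphisms \emph{into} $H$ (injectivity of $H$ as an abstract abelian group) and to guarantee that the relevant algebraic extensions split.

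First I would set up the candidate isomorphism. Let $\mathcal I:A\hookrightarrow G$ denote the inclusion. By Lemma \ref{lemm_homo}, the assignment $[E]\mapsto[E\mathcal I]$ is a homomorphism $\phi:\ext(G,H)\to\ext(A,H)$. Given $E:0\to H\stackrel{\imath}{\to}X\stackrel{\pi}{\to}G\to 0$, Lemma \ref{lemma_pullback} identifies the pull-back $E\mathcal I$ with the restricted extension $0\to H\stackrel{\imath}{\to}\pi^{-1}(A)\stackrel{\pi}{\to}A\to 0$; the crucial structural fact, which I would record for both directions, is that since $A$ is open in $G$ the subgroup $\pi^{-1}(A)$ is open in $X$.

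For injectivity, I would assume $\phi([E])=0$, so the restricted extension $0\to H\to\pi^{-1}(A)\to A\to 0$ splits and Proposition \ref{theorem_section} supplies a continuous retraction $P:\pi^{-1}(A)\to H$ with $P\circ\imath=\id_H$. Because $H$ is divisible it is injective in the category of abstract abelian groups, so $P$ extends to an abstract homomorphism $\widetilde P:X\to H$. The key point is that $\widetilde P$ agrees with the continuous map $P$ on the \emph{open} subgroup $\pi^{-1}(A)$; a homomorphism that is continuous on an open subgroup is continuous at $0$, hence continuous. Since $\widetilde P\circ\imath=P\circ\imath=\id_H$, Proposition \ref{theorem_section} yields that $E$ splits, i.e. $[E]=0$.

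For surjectivity, I would start with an arbitrary extension $E:0\to H\stackrel{\imath}{\to}X\stackrel{\pi}{\to}A\to 0$. Divisibility of $H$ (injectivity once more) ensures that $E$ splits algebraically, which is exactly the hypothesis of Lemma \ref{lemm_over}. Applying that lemma produces a group topology $\tau$ on $H\times G$, an extension $\overline E:0\to H\to(H\times G,\tau)\to G\to 0$, and an embedding $f:X\to(H\times G,\tau)$ fitting the stated commutative diagram; comparing orders via the short exact rows, commutativity forces $f$ to identify $X$ with $\pi_\tau^{-1}(A)$, so $\overline E\mathcal I$ is equivalent to $E$ and therefore $\phi([\overline E])=[E]$. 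The only genuinely nontrivial ingredient is this surjectivity step, where one must manufacture an extension of $G$ out of one of $A$; since Lemma \ref{lemm_over} does precisely that, the remaining work is merely to verify that its output restricts back to $E$ over the open subgroup $A$, while everything else reduces to the two standard consequences of divisibility together with the observation that continuity of a homomorphism may be tested on an open subgroup.
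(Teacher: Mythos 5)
Your proposal is correct and follows essentially the same route as the paper: the pull-back map along $A\hookrightarrow G$, injectivity via extending the continuous retraction $P$ over the divisible (hence injective) group $H$ and testing continuity on the open subgroup $\pi^{-1}(A)$, and surjectivity via Lemma \ref{lemm_over} together with the uniqueness clause of Lemma \ref{lemma_pullback}. You even make explicit a point the paper leaves implicit, namely that divisibility of $H$ is what supplies the algebraic splitting required as the hypothesis of Lemma \ref{lemm_over}.
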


\begin{proof}
	
	We will use the same strategy as in Theorem \ref{prop_ext_completions_iso}.
	Consider the canonical inclusion $\mathcal I: A\to   G$ . According to Lemma \ref{lemm_homo} the map
	\[
	\begin{array}{cccc}
	\phi:&\ext (  G,H)&\longrightarrow & \ext(  A,H)\\
	&\left[E \right]        &\longmapsto     & \left[  E\mathcal I\right]
	\end{array}
	\]
	is a homomorphism of abelian groups. Let us see that $\phi$ is an isomorphism.	
	
	To prove that $\phi$ is injective, pick  an extension of topological abelian groups $E :0\to H\stackrel{\imath }{\to} X\stackrel{\pi }{\to}  G\to 0 $ and  suppose that $ E\mathcal I $ splits. 
	Since $\imath(H)\leq \pi^{-1}(G)$, 
	the sequence $E':0\to   H\stackrel{  {\imath }}{\to}   {\pi^{-1}(A)} \stackrel{ \pi_{|\pi^{-1}(A)} }{\to}  G\to 0 $ is exact. The mapping $\pi_{|\pi^{-1}(A)}$ is open hence $E'$ is a topological  extension. Furthermore the following diagram is commutative:
	\[
	\xymatrix{
		E:&	0\ar[r]       &H\ar[r]^\imath   &X\ar[r]^\pi        &  G \ar[r]  &0\\
		E':&	0\ar[r]   &H \ar@{=}[u]\ar[r]^\imath           &\pi^{-1}(A)\ar[r]_{ \;\;\;\;\;\pi_{|\pi^{-1}(A)}}\ar@{^{(}->}[u]                 &A\ar@{^{(}->}[u]^{\mathcal I}\ar[r]  &0
	}
	\]
	According to Lemma \ref{lemma_pullback}, $E'$ must be equivalent to $E\mathcal I$. Then $E'$ splits and applying Proposition \ref{theorem_section} we find a continuous homomorphism $P: \pi^{-1}(A)\to   H$ such that $P\circ   \imath = \id_{{H}}$.
	Since $H$ is divisible we can extend the homomorphism $P$ to a homomorphism $R:X\to H$. Since $\pi^{-1}(A)$ is open in $X$ and $R_{|\pi^{-1}(A)} = P$, $R$ is a continuous homomorphism. As $\imath (H)\leq \pi^{-1}(A)$, $R$ satisfies that $R\circ \imath = P\circ \imath = \id_H$   and by Proposition \ref{theorem_section}, $E$ splits.
	
	To check that $\phi$ is onto, choose  an extension of topological abelian groups $E :0\to H\stackrel{I }{\to} Y\stackrel{p }{\to}  A\to 0 $. 
	From   Lemma \ref{lemm_over} we know that there exists a group topology $\tau$ on $H\times G$ and a  commutative  diagram
	\begin{equation*}\label{equa_opened}
		\xymatrix{
			\overline E:&	0\ar[r]   &H\ar[r]^{\imath_\tau}\ar@{=}[d]   &(H\times G,\tau)\ar[r]^{\pi_\tau}         &G\ar[r]  &0\\
			E &	0\ar[r]   &H\ar[r]^{I}           &Y\ar[u]\ar[r]^{p}                &A\ar@{^{(}->}[u]^{\mathcal I}\ar[r]  &0
		}
	\end{equation*}
	where $\imath_\tau$ and $\pi_\tau$ are the canonical mappings and $\overline E$ is an extension of topological abelian groups. In virtue of   Lemma \ref{lemma_pullback}, $ E$ must be equivalent to $(  \overline{  E})\mathcal I$, concluding that $\phi(\left[\overline{  E} \right])=\left[ (\overline{  E}) \mathcal I\right]=   \left[ E\right]$.

\end{proof}

\begin{remark}
	In   \cite[Proposition 3.11]{bcdt} the authors prove that for every \v Cech-complete topological abelian group $H$, if $\ext (\varrho G, H)=0$ then $\ext(G, H)=0$.   Theorem \ref{prop_ext_completions_iso} is a generalization of this fact.
	In \cite[Corollary 14]{bcd} it is proven that if $A$ is an open subgroup of a topological abelian group $G$ and $\ext (A,\T)=0$ then $\ext(G,\T)=0$. Theorem \ref{teor_open} generalizes this result.
\end{remark}

\section{Properties of the $\ext$ group with respect to products and coproducts}\label{sect_products}

\begin{theorem}\label{theo_prod} 
	
	\begin{enumerate}[(i)]
	\item Let $G$ be a topological abelian group and let $\{H_\alpha:\alpha<\kappa\}$ be a family of topological abelian groups. Then   $\ext (G,\prod_{\alpha<\kappa} H_\alpha)\cong \prod_{\alpha<\kappa} \ext (G,H_\alpha)$. 
	 
	 \item 	Let $Z$ be a topological vector space and let $\{Y_\alpha:\alpha<\kappa\}$ be a family of topological vector spaces. Then   $\extv (Z,\prod_{\alpha<\kappa} Y_\alpha)\cong \prod_{\alpha<\kappa} \extv (Z,Y_\alpha)$.
	 \end{enumerate}

\end{theorem}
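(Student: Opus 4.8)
The plan is to realize the isomorphism through the canonical projections. Write $p_\beta:\prod_{\alpha<\kappa}H_\alpha\to H_\beta$ for the projection onto the $\beta$-th factor. By Lemma \ref{lemm_homo} each push-out assignment $[E]\mapsto[p_\beta E]$ is a homomorphism $\ext(G,\prod_{\alpha<\kappa}H_\alpha)\to\ext(G,H_\beta)$, and by the universal property of the product topology these assemble into a single homomorphism
\[
\Phi:\ext\Big(G,\prod_{\alpha<\kappa}H_\alpha\Big)\longrightarrow\prod_{\alpha<\kappa}\ext(G,H_\alpha),\qquad \Phi([E])=\big([p_\beta E]\big)_{\beta<\kappa}.
\]
I would then prove that $\Phi$ is bijective, treating injectivity and surjectivity separately; the algebraic skeleton is guided by the corresponding statement for abstract groups, so the real content is keeping track of continuity and openness.

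For injectivity I would start with $E:0\to\prod_\alpha H_\alpha\stackrel{\imath}{\to}X\stackrel{\pi}{\to}G\to0$ in the kernel of $\Phi$, so that every push-out $p_\beta E$ splits. The push-out diagram of Lemma \ref{lemma_pushout} supplies, for each $\beta$, a continuous homomorphism $s_\beta:X\to PO_\beta$ with $s_\beta\circ\imath=r_\beta\circ p_\beta$ (where $r_\beta:H_\beta\to PO_\beta$ is the new inclusion), and Proposition \ref{theorem_section} applied to the split extension $p_\beta E$ yields a retraction $Q_\beta:PO_\beta\to H_\beta$ with $Q_\beta\circ r_\beta=\id_{H_\beta}$. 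Then $\rho_\beta:=Q_\beta\circ s_\beta$ satisfies $\rho_\beta\circ\imath=p_\beta$, and the induced map $\rho=(\rho_\beta)_\beta:X\to\prod_\alpha H_\alpha$ is a continuous homomorphism (product topology) with $\rho\circ\imath=\id$. By Proposition \ref{theorem_section}(iii), $E$ splits, so $\Phi$ is injective.

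For surjectivity I would exhibit a preimage of a given family $([E_\alpha])_\alpha$, with $E_\alpha:0\to H_\alpha\to X_\alpha\to G\to0$. First form the product extension $\prod_{\alpha<\kappa}E_\alpha:0\to\prod_\alpha H_\alpha\to\prod_\alpha X_\alpha\to\prod_\alpha G\to0$; here I must check that it is again an extension of topological abelian groups, i.e. that a product of embeddings is an embedding and that a product of open continuous surjective homomorphisms is open and surjective (the latter because a basic open box maps onto a basic open box, each non-trivial factor being surjective). Pulling this back along the diagonal $\Delta_G:G\to\prod_\alpha G$ gives, by Lemma \ref{lemma_pullback}, an extension $E:=(\prod_\alpha E_\alpha)\Delta_G$ of $G$ by $\prod_\alpha H_\alpha$. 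It then remains to verify $p_\beta E\equiv E_\beta$ for each $\beta$, which I would obtain from the standard calculus of extensions (valid here by the same universal-property arguments as in \cite{maclane}): the interchange law $p_\beta\big((\prod_\alpha E_\alpha)\Delta_G\big)\equiv\big(p_\beta(\prod_\alpha E_\alpha)\big)\Delta_G$, the identity $p_\beta(\prod_\alpha E_\alpha)\equiv E_\beta\,\mathrm{pr}_\beta$ coming from the morphism of extensions given by the $\beta$-th projections, functoriality of the pull-back, and finally $\mathrm{pr}_\beta\circ\Delta_G=\id_G$, so that
\[
p_\beta E\equiv\big(E_\beta\,\mathrm{pr}_\beta\big)\Delta_G\equiv E_\beta\,(\mathrm{pr}_\beta\circ\Delta_G)=E_\beta.
\]
Hence $\Phi([E])=([E_\alpha])_\alpha$ and $\Phi$ is onto, so $\Phi$ is an isomorphism.

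The hardest part is not the algebra but the topology hidden in the calculus of extensions: one must confirm that the product of infinitely many extensions stays open onto its image and that the interchange and morphism identities continue to produce \emph{continuous} (and, in part (ii), \emph{linear}) equivalences, noting that the maps realizing equivalences are automatically topological isomorphisms. Granting this, part (ii) is identical: replacing groups by topological vector spaces and homomorphisms by continuous linear maps, the projections, the diagonal and all push-out/pull-back maps are linear, the product and diagonal constructions stay within the category of t.v.s. by the t.v.s. versions of Lemmas \ref{lemma_pushout} and \ref{lemma_pullback}, and the same two arguments give an isomorphism $\extv(Z,\prod_\alpha Y_\alpha)\cong\prod_\alpha\extv(Z,Y_\alpha)$.
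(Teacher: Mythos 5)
Your proposal is correct and follows essentially the same route as the paper: the same homomorphism $\Phi$ via push-outs along the projections $p_\beta$, the identical injectivity argument assembling the retractions $Q_\beta\circ s_\beta$ into a left inverse of $\imath$, and for surjectivity the same preimage, since the paper's explicitly constructed group $\mathcal B=\{((x_\alpha)_\alpha,g):\pi_\alpha(x_\alpha)=g\ \forall\alpha\}$ is precisely your pull-back $(\prod_\alpha E_\alpha)\Delta_G$ written out concretely. The only cosmetic difference is that the paper verifies $p_\beta\mathcal E\equiv E_\beta$ directly from the uniqueness clause of Lemma \ref{lemma_pushout} applied to the projection morphism $\mathcal P_\beta:\mathcal B\to X_\beta$, whereas you route the verification through the interchange and functoriality identities of the calculus of extensions; both are justified by the universal properties stated in Lemmas \ref{lemma_pullback} and \ref{lemma_pushout}.
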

\begin{proof}
	(i).
  Consider for every $\beta<\kappa$ the canonical projection
	$
	p_\beta:   \prod_{\alpha<\kappa} H_\alpha \to    H_\beta
	$.
	Given    an extension of topological abelian groups $E: 0\to \prod_{\alpha<\kappa} H_\alpha \stackrel{\imath}{\to} X\stackrel{\pi}{\to} G\to 0$, take the push-out extension   $p_\beta E: 0\to  H_\beta \stackrel{r_\beta}{\to} PO_\beta\stackrel{P_\beta}{\to} G\to 0$ with respect to $p_\beta$ and consider the commutative diagram (\ref{equa_prod}) as in Lemma \ref{lemma_pushout}.
	
	\begin{equation}\label{equa_prod}
	\xymatrix{
		E:&	0\ar[r]   & \prod_{\alpha<\kappa} H_\alpha\ar[r]^\imath\ar[d]^{p_\beta}   &X\ar[r]^\pi\ar[d]^{s_\beta}       &G\ar@{=}[d]\ar[r]  &0\\
	p_\beta E:&	0\ar[r]   &H_\beta\ar[r]^{r_\beta}           &PO_\beta\ar[r]^{P_\beta}                &G\ar[r]  &0
	}
	\end{equation}

    In virtue of  Lemma \ref{lemm_homo}  the map
	
	\[
	\begin{array}{cccc}
	\phi: &\ext (G, \prod_{\alpha<\kappa}H_\alpha)&\longrightarrow &  \prod_{\alpha<\kappa} \ext (G,H_\alpha)\\
	         &\left[E \right]        &\longmapsto     & (\left[ p_\alpha E\right])_{\alpha<\kappa}
	\end{array}
	\]
	is a homomorphism of abelian groups. 
	
	Let us check that $\phi$ is injective.
	Take  an extension of topological groups $E: 0\to \prod_{\alpha<\kappa} H_\alpha \stackrel{\imath}{\to} X\stackrel{\pi}{\to} G\to 0$, and suppose that $p_\beta E: 0\to  H_\beta \stackrel{r_\beta}{\to} PO_\beta\stackrel{p_\beta}{\to} G\to 0$ splits for every $\beta<\kappa$. By Proposition \ref{theorem_section} for every $\alpha<\kappa$ there exists a continuous homomorphism $t_\alpha: PO_\alpha \to H_\alpha$ with $t_\alpha\circ r_\alpha= \id_{H_\alpha}$. Define the continuous homomorphism
	\[
	\begin{array}{cccc}
	T:& X&\longrightarrow&  \prod_{\alpha<\kappa} H_\alpha \\
	  & x &\longmapsto   & (t_\alpha(s_\alpha (x)))_{\alpha<\kappa}
	  \end{array}
	\]
	 By the commutativity of (\ref{equa_prod}), $T\circ \imath = \id_{\prod H_\alpha}$, thus $E$ splits.

	To see that $\phi$ is onto pick a family of extensions
	$   \{ E_\alpha :0\to H_\alpha\stackrel{ \imath_\alpha}{\to} X_\alpha\stackrel{\pi_\alpha}{\to}G\to 0 : \alpha<\kappa\}$. 
	Consider the extension $\mathcal E: 0\to \prod_{\alpha<\kappa}H_\alpha\stackrel{ \mathcal I}{\to} \mathcal B \stackrel{\mathcal P}{\to}G\to 0$, where 
	\[\mathcal B=\Big\{\big((x_\alpha)_{\alpha<\kappa}, g\big)\in \prod_{\alpha<\kappa} X_\alpha\times G : \pi_\alpha (x_\alpha)=g\;\forall \alpha<\kappa\Big\},\]
	$\mathcal  I ((h_\alpha)_{\alpha<\kappa})= ((\imath_\alpha (h_\alpha))_{\alpha<\kappa}, 0)$ and $\mathcal P ((x_\alpha)_{\alpha<\kappa}, g)=g$. It is easy to check that $\mathcal E $ is an extension of topological abelian groups. Define for each $\beta <\kappa$ the continuous homomorphism
	\[
	\begin{array}{cccc}
	\mathcal P_\beta:&\mathcal B&\longrightarrow &X_\beta\\
	                 & \big((x_\alpha)_{\alpha<\kappa}, g\big) &\longmapsto & x_\beta
	\end{array}
	\]	
	The following diagram is commutative for every $\beta<\kappa$.
		\[
	\xymatrix{
		\mathcal E:&	0\ar[r]   & \prod_{\alpha<\kappa} H_\alpha\ar[r]^{\mathcal I }\ar[d]^{p_\beta}   &\mathcal B\ar[r]^{\mathcal P}\ar[d]^{\mathcal P_\beta}       &G\ar@{=}[d]\ar[r]  &0\\
		E_\beta:&	0\ar[r]   &H_\beta\ar[r]^{\imath_\beta}           &X_\beta\ar[r]^{\pi_{\beta}}                &G\ar[r]  &0
	}
	\]
	Consequently,   by Lemma \ref{lemma_pushout},  $E_\beta$ must be equivalent to the push-out sequence $p_\beta \mathcal E$. Hence $\varphi (\left[\mathcal E\right])=(\left[E_\alpha\right])_{\alpha<\kappa}$, which concludes the proof.
	
	(ii). All the steps in the previous part remain valid in the category of topological vector spaces so we can proceed in the same way.
\end{proof}

\begin{remark}
In \cite[Proposition 3.12]{bcdt} it is proved that given a family of topological abelian groups $\{H_\alpha: \alpha<\kappa\}$ and a topological group $G$, $\ext(G,H_\alpha)=0$ for every $\alpha<\kappa$ if and only if $\prod_{\alpha<\kappa} \ext (G,H_\alpha)=0$. This result is a particular case of Theorem \ref{theo_prod}. \end{remark}

\begin{corollary}
	Let $G$ be a locally precompact abelian group. Then  $\ext (G, \prod_{p\in\p}\Q^{\alpha_p}_p)$ is a divisible, torsion-free group for every collection of ordinal numbers $\alpha_p, p\in\p$.
\end{corollary}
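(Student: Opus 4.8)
The plan is to reduce to a single factor $\Q_p$ and then exploit that every nonzero integer is invertible in the field $\Q_p$. First I would rewrite the coefficient group as $\prod_{p\in\p}\Q_p^{\alpha_p}=\prod_{p\in\p}\prod_{\beta<\alpha_p}\Q_p$ and apply Theorem \ref{theo_prod}(i) to obtain $\ext(G,\prod_{p\in\p}\Q_p^{\alpha_p})\cong\prod_{p\in\p}\prod_{\beta<\alpha_p}\ext(G,\Q_p)$. Since an arbitrary product of divisible (respectively torsion-free) abelian groups is again divisible (respectively torsion-free), it suffices to show that $\ext(G,\Q_p)$ is a divisible, torsion-free group for each prime $p$.

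For the single factor I would argue that multiplication by $n$ is a bijective endomorphism of $\ext(G,\Q_p)$ for every integer $n\geq 1$. The key point is that, since $\Q_p$ is a field of characteristic zero and its multiplication is continuous, the map $\mu_n\colon\Q_p\to\Q_p$, $x\mapsto nx$, is a topological automorphism whose inverse is multiplication by $n^{-1}\in\Q_p$. By the biadditivity of the Ext bifunctor in the coefficient variable --- proved exactly as in \cite[Chapter III]{maclane} --- the push-out map $[E]\mapsto[\mu_n E]$ of Lemma \ref{lemm_homo} is additive in the morphism $\mu_n$; as $\mu_n=n\cdot\id_{\Q_p}$ in $\mathrm{End}(\Q_p)$, this push-out map is precisely multiplication by $n$ in the abelian group $\ext(G,\Q_p)$. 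Functoriality of the push-out together with $\mu_n\circ\mu_{n^{-1}}=\id_{\Q_p}$ then shows that this map is an automorphism of $\ext(G,\Q_p)$. Thus multiplication by $n$ is bijective: its surjectivity gives divisibility and its injectivity gives torsion-freeness, completing the argument.

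The step I expect to be the main obstacle is the identification of multiplication by $n$ in the Baer-sum group with the push-out $(\mu_n)_*$, that is, the biadditivity of the topological Ext functor in the coefficient homomorphism; this is formal and follows from the push-out and diagonal/codiagonal constructions exactly as in the abstract setting, but one must check that those constructions remain valid for topological abelian groups, which is guaranteed by Lemmas \ref{lemma_pullback} and \ref{lemma_pushout}. I note that this argument never uses the hypothesis that $G$ is locally precompact; alternatively, since $\Q_p$ is \v Cech-complete, one may first replace $G$ by its completion $\varrho G$ via Theorem \ref{prop_ext_completions_iso} and run the same reasoning inside the class $\mathcal L$, which is presumably the route suggested by the placement of the statement.
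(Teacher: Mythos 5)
Your reduction to a single factor via Theorem \ref{theo_prod} matches the paper's first step, but from there the two arguments diverge. The paper uses that $G$ is locally precompact and $\Q_p$ is \v Cech-complete to replace $G$ by $\varrho G\in\mathcal L$ via Theorem \ref{prop_ext_completions_iso}, and then simply quotes \cite[Corollary 1.5]{saal} to get that $\ext(\varrho G,\Q_p)$ is divisible and torsion-free for the locally compact group $\varrho G$. You instead prove that single-factor statement directly: multiplication by $n$ is a topological automorphism of $\Q_p$ (with inverse multiplication by $1/n$, continuous because $\Q_p$ is a topological field), push-out along it is an automorphism of $\ext(G,\Q_p)$ by functoriality, and it coincides with multiplication by $n$ in the Baer-sum group by biadditivity of $\ext$ in the coefficient morphism. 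That biadditivity is not literally contained in Lemma \ref{lemm_homo} (which fixes the morphism and varies the extension), but, as you correctly flag, it transfers verbatim from \cite[Chapter III]{maclane} because the push-out, pull-back, product, diagonal and codiagonal constructions all exist and consist of continuous maps here --- the paper already relies on exactly this kind of transfer to define the Baer sum and to prove Lemma \ref{lemm_homo}. Your route is more self-contained (no appeal to the Sahleh--Alijani preprint) and strictly more general: it never uses that $G$ is locally precompact, so it establishes the conclusion for an arbitrary topological abelian group $G$, and indeed for any coefficient group on which multiplication by each nonzero integer is a topological automorphism. What the paper's route buys is brevity and the fact that it stays entirely within results already on record for the class $\mathcal L$.
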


\begin{proof}
	According to Theorem \ref{theo_prod}
	\[\textstyle
	\ext (G, \prod_{p\in\p}\Q^{\alpha_p}_p)\cong\prod_{p\in\p} \ext (G,  \Q^{\alpha_p}_p)\cong\prod_{p\in\p} \ext (G,  \Q_p)^{\alpha_p}.
	\]
	Since $\Q_p$ is \v Cech-complete,  by  Theorem \ref{prop_ext_completions_iso}  $\ext (G, \Q_p) \cong \ext (\varrho G, \Q_p)$ for every $p\in \p$ and
	\[\textstyle
	\ext (G, \prod_{p\in\p}\Q^{\alpha_p}_p)\cong \prod_{p\in\p} \ext (\varrho  G,  \Q_p)^{\alpha_p}.
	\]
	$\ext (\varrho  G,  \Q_p)$ is divisible and torsion-free by  \cite[Corollary 1.5]{saal}, then   $\ext (G, \prod_{p\in\p}\Q^{\alpha_p}_p)$ is  also divisible and torsion-free.
\end{proof}

\begin{corollary}
	Let  be $G$   a locally precompact abelian group. $\ext (G,X)=0$ for all $X$ product of divisible $\sigma$-compact  groups in $\mathcal L$ if and only if $\varrho G=\R^n\times G'$ where $n$ is a non-negative integer and $G'$ contains a compact open subgroup $K$ such that $K\cong \prod_{\alpha <\alpha_0} \Z/p_\alpha^{r_\alpha}\Z \times \prod_{\beta<\beta_0} \Z_{p_\beta}^{\gamma_\beta}$ where $\alpha_0,\beta_0\in\omega$, $\{\gamma_\beta:\beta<\beta_0\}$ is a family of arbitrary ordinal numbers and $p_\alpha, p_\beta$ are prime numbers for all $\alpha, \beta$.
\end{corollary}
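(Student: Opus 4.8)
The strategy is the one behind Corollary \ref{theo_base}: transport the problem to the completion $\varrho G$, where it becomes a question inside $\mathcal L$, the only new feature being the use of the product formula to absorb the word \emph{product}. Throughout set $L=\varrho G$, which is locally compact, and hence \v Cech-complete, because $G$ is locally precompact. First I would remove the products. Let $\{X_\alpha:\alpha<\kappa\}$ be any family of divisible $\sigma$-compact groups in $\mathcal L$. By Theorem \ref{theo_prod}(i),
\[
\textstyle\ext\big(G,\prod_{\alpha<\kappa}X_\alpha\big)\cong\prod_{\alpha<\kappa}\ext(G,X_\alpha),
\]
and a product of abelian groups vanishes precisely when each factor does. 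Since a single divisible $\sigma$-compact group is the one-factor case of such a product, the hypothesis that $\ext(G,X)=0$ for every product $X$ of divisible $\sigma$-compact groups in $\mathcal L$ is equivalent to the statement that $\ext(G,Y)=0$ for every divisible $\sigma$-compact $Y\in\mathcal L$.

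Next I would pass to the completion. Each divisible $\sigma$-compact $Y\in\mathcal L$ is locally compact, hence \v Cech-complete, so Theorem \ref{prop_ext_completions_iso} gives $\ext(G,Y)\cong\ext(\varrho G,Y)=\ext(L,Y)$. Thus the condition on $G$ is equivalent to: $\ext(L,Y)=0$ for every divisible $\sigma$-compact $Y\in\mathcal L$, which is now a statement about the locally compact group $L$ entirely inside $\mathcal L$.

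It remains to identify those locally compact abelian $L$ for which $\ext(L,Y)=0$ for all divisible $\sigma$-compact $Y\in\mathcal L$. Here I would write $L\cong\R^n\times G'$ with $G'$ containing a compact open subgroup $K$ (structure theorem) and appeal to the corresponding classification in $\mathcal L$ from \cite{saal}, which pins down the vanishing to $K$ having the stated form $\prod_{\alpha<\alpha_0}\Z/p_\alpha^{r_\alpha}\Z\times\prod_{\beta<\beta_0}\Z_{p_\beta}^{\gamma_\beta}$; taking $L=\varrho G$ then yields the corollary. Concretely, one can reach this shape by Pontryagin duality: since $\ext(L,Y)\cong\ext(Y^\wedge,L^\wedge)$ by \cite[Theorem 2.12]{fulp}, and since $Y$ is divisible and $\sigma$-compact exactly when $Y^\wedge$ is torsion-free and metrizable, the condition becomes $\ext(M,L^\wedge)=0$ for every torsion-free metrizable $M\in\mathcal L$, and the description of $K$ is read off from the structure of $L^\wedge$. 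The main obstacle is this last classification in $\mathcal L$; the two reduction steps are formal once Theorems \ref{theo_prod} and \ref{prop_ext_completions_iso} are in hand.
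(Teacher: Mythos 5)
Your proposal is correct and follows essentially the same route as the paper: reduce the product case to single factors via Theorem \ref{theo_prod}, transfer to the completion via Theorem \ref{prop_ext_completions_iso} (every group in $\mathcal L$ being \v Cech-complete), and then quote the Sahleh--Alijani classification of locally compact groups $L$ with $\ext(L,Y)=0$ for all divisible $\sigma$-compact $Y\in\mathcal L$ (the paper cites \cite[Theorem 2.7]{sahleh} rather than \cite{saal} for this, and does not need your supplementary duality sketch). The only organizational difference is that you front-load the product reduction as an equivalence, whereas the paper uses the trivial direction in the forward implication and invokes Theorem \ref{theo_prod} only at the end of the converse; this is immaterial.
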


\begin{proof}
	 Suppose that a locally precompact abelian group $G$ has the property that $\ext (G, X) = 0$  for all   $X$ product of divisible $\sigma$-compact  groups in $\mathcal L$. In particular  $\ext (G, X) = 0$ for  all $X$ divisible $\sigma$-compact in $\mathcal L$.  Since every group in $\mathcal L$ is \v Cech-complete, by Theorem \ref{prop_ext_completions_iso}, $\ext (\varrho G, X) = \ext (G, X)=0$, for all divisible $\sigma$-compact $X\in \mathcal L$. According to  \cite[Theorem 2.7]{sahleh},  $\varrho G $ has the desired structure. 
	 
	 Conversely if $\varrho{G}$ has the properties described in the statement, in virtue of  \cite[Theorem 2.7]{sahleh},  $\ext (\varrho G, X) = 0$ for all $X$ divisible $\sigma$-compact  group  $\mathcal L$. 
	 By Theorem  \ref{prop_ext_completions_iso},  $\ext (G, X) = 0$ for every $X$ divisible $\sigma$-compact in $\mathcal L$. Finally, from Theorem \ref{theo_prod} we conclude that the same is true for every $X$ product of divisible $\sigma$-compact  groups in $\mathcal L$.  
	 
\end{proof}

 \begin{corollary}
 	Let $G$ be a torsion group in $\mathcal L$ and let $H$ be product of  divisible torsion-free groups in $\mathcal L$. Then $\ext (G,H)=0$.
 \end{corollary}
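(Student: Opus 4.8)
The plan is to reduce everything to the product case together with a single factor computation, exactly in the spirit of the preceding corollaries. Write $H=\prod_{\alpha<\kappa} D_\alpha$, where each $D_\alpha$ is a divisible torsion-free group in $\mathcal L$. By Theorem \ref{theo_prod}(i) we have
\[
\ext(G,H)\cong \prod_{\alpha<\kappa}\ext(G,D_\alpha),
\]
so it suffices to prove that $\ext(G,D)=0$ for a single divisible torsion-free $D\in\mathcal L$, under the standing hypothesis that $G$ is a torsion group in $\mathcal L$.

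First I would invoke completeness to pass to the completion in the second variable. Each $D\in\mathcal L$ is \v Cech-complete, so Lemma \ref{prop_completion_ext} applies and, more to the point, I can try to reduce to the Hom--Ext exact sequence used in Example 3.9. The key structural fact is that a divisible torsion-free group in $\mathcal L$ is, by the structure theory, of the form $\R^n\times V$ where $V$ is built from copies of $\R$, $\Q_p$ and discrete $\Q$-vector spaces; in the locally compact setting the relevant building blocks reduce to $\R$ and $\Q_p$. So the heart of the matter is to show $\ext(G,\R)=0$ and $\ext(G,\Q_p)=0$ for $G$ a locally compact torsion group. For $\ext(G,\R)$ I would cite \cite[Corollary 3.15]{bcdt} as in Example 3.9 (a product of locally precompact groups, here $G$ itself, kills $\ext(\,\cdot\,,\R)$). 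For the $\Q_p$ factors one uses the Hom--Ext six-term sequence attached to $0\to\Z_p\to\Q_p\to\Q_p/\Z_p\to 0$ (or the analogous rational sequence), where the connecting maps land in $\chom$ and $\ext$ groups that vanish because $G$ is torsion and the target is torsion-free divisible.

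The main obstacle will be the reduction to these building blocks: making precise which divisible torsion-free groups actually occur in $\mathcal L$ and verifying that the decomposition $D\cong \R^n\times V$ is compatible with the $\ext$ functor via Theorem \ref{theo_prod}(i) again (a second application, now splitting $D$ rather than $H$ into factors). Once the reduction is in place, the vanishing on each factor is routine: torsion-freeness and divisibility of the target together with $G$ being torsion force $\chom(G,D)$ and the relevant $\ext$ terms to collapse, so the exact sequence pins $\ext(G,D)=0$. Assembling these via the product isomorphism then yields $\ext(G,H)=\prod_\alpha\ext(G,D_\alpha)=\prod_\alpha 0=0$, completing the proof.
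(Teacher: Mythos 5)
Your first reduction, $\ext(G,H)\cong\prod_{\alpha<\kappa}\ext(G,H_\alpha)$ via Theorem \ref{theo_prod}(i), matches the paper, but after that your argument has a genuine gap. The claimed structure theory is false: divisible torsion-free groups in $\mathcal L$ are \emph{not} built only from $\R$ and $\Q_p$. The $a$-adic solenoid $\Sigma_a\cong\Q_d^\wedge$ (which appears in Example \ref{exam_adele}) is compact, connected, divisible and torsion-free, yet is not a product of copies of $\R$, $\Q_p$ and discrete $\Q$-vector spaces; the same goes for duals of large discrete $\Q$-vector spaces. So the case analysis you propose does not cover all the factors $D$, and the ``main obstacle'' you flag is not a technicality but a fatal one. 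Even on the factors you do treat, the $\Q_p$ step is not closed: the Hom--Ext sequence attached to $0\to\Z_p\to\Q_p\to\Q_p/\Z_p\to 0$ only traps $\ext(G,\Q_p)$ between the image of $\ext(G,\Z_p)$ and the kernel of the map to $\ext(G,\Q_p/\Z_p)$, and neither of those terms is shown to vanish (and $\Q_p/\Z_p$ is torsion, so torsion-freeness of the target is of no help there).

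The paper's route uses the torsion hypothesis on $G$ structurally rather than just to kill Hom groups: since $G$ is a torsion group in $\mathcal L$, it has a compact open subgroup $K$ by \cite[24.21]{hr}, and $K$ is again torsion. Because each $H_\alpha$ is divisible, Theorem \ref{teor_open} gives $\ext(G,H_\alpha)\cong\ext(K,H_\alpha)$, and then \cite[Theorem 1.6]{saal} yields $\ext(K,H_\alpha)=0$ for $K$ compact torsion and $H_\alpha$ divisible torsion-free. This avoids any classification of the divisible torsion-free groups in $\mathcal L$ and is precisely the intended application of Theorem \ref{teor_open}, which your proposal never invokes. To repair your argument you would have to either supply a correct classification together with vanishing proofs for every building block (including the solenoid-type factors), or switch to the open-subgroup reduction.
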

 \begin{proof}
 	Suppose that $H=\prod_{\alpha<\kappa} H_\alpha$ with $H_\alpha$ divisible, torsion-free and   $H_\alpha\in \mathcal L$ for every $\alpha<\kappa$.
 	Since $G$ is  locally compact abelian and torsion   we know by \cite[24.21]{hr} that $G$ contains an open compact subgroup $K$. It is clear that $K$ will be a torsion group too.
 	Applying Theorem \ref{theo_prod} and Theorem \ref{teor_open}   
 	\begin{align*}
 		\ext (G,H)\textstyle
 		&\textstyle\cong \ext (G,\prod_{\alpha<\kappa} H_\alpha)\cong \prod_{\alpha<\kappa} \ext( G, H_\alpha)
 		\cong \prod_{\alpha<\kappa}\ext(K,H_\alpha).\\
 	\end{align*}
 	Finally by \cite[Theorem 1.6]{saal} $\ext (K,H_\alpha) =0$ for every $\alpha<\kappa$.

 \end{proof}

\begin{lemma}\label{lemm_coproduct_extensions}
	Let $\{E_\alpha: 0\to H_\alpha \stackrel{\imath_\alpha}{\to} X_\alpha \stackrel{\pi_\alpha}{\to} G_\alpha \to 0 :\alpha<\omega\}$ be a countable family of extensions of topological abelian groups. Consider the coproducts $\bigoplus_{\alpha<\kappa} H_\alpha$, $\bigoplus_{\alpha<\omega} X_\alpha$, $\bigoplus_{\alpha<\omega} G_\alpha$ and the natural mappings $\bigoplus_{\alpha<\omega} \imath_\alpha : \bigoplus_{\alpha<\omega} H_\alpha\to \bigoplus_{\alpha<\omega} X_\alpha$ and $\bigoplus_{\alpha<\omega}   \pi_\alpha: \bigoplus_{\alpha<\omega} X_\alpha \to \bigoplus_{\alpha<\omega} G_\alpha$.
	The sequence \[\textstyle\bigoplus_{\alpha<\omega} E_\alpha: 0\longrightarrow \bigoplus_{\alpha<\omega} H_\alpha \stackrel{\bigoplus_{\alpha<\omega}\imath_\alpha}{\longrightarrow} \bigoplus_{\alpha<\omega}X_\alpha \stackrel{\bigoplus_{\alpha<\omega}\pi_\alpha}{\longrightarrow} \bigoplus_{\alpha<\omega}G_\alpha \longrightarrow 0\] is an extension of topological abelian groups.
\end{lemma}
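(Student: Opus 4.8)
The plan is to verify the three conditions defining an extension of topological abelian groups for the sequence $\bigoplus_{\alpha<\omega}E_\alpha$: algebraic exactness, continuity of the two natural maps, and openness of each onto its image. Throughout I would exploit the fact recalled in the Preliminaries that for a \emph{countable} index set the coproduct topology on a direct sum coincides with the box topology; this coincidence is exactly what makes the countability hypothesis indispensable.

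Algebraic exactness is routine. The map $\bigoplus_{\alpha<\omega}\imath_\alpha$ is injective because each $\imath_\alpha$ is, and it clearly preserves finite support. A finitely supported tuple lies in $\ker(\bigoplus_{\alpha<\omega}\pi_\alpha)$ if and only if each coordinate lies in $\ker\pi_\alpha=\im\imath_\alpha$, that is, if and only if the tuple lies in $\im(\bigoplus_{\alpha<\omega}\imath_\alpha)$; and surjectivity of $\bigoplus_{\alpha<\omega}\pi_\alpha$ follows by lifting each coordinate of a finitely supported element and choosing $0$ outside its support, which again preserves finite support. Continuity of both maps I would deduce from the universal property of the coproduct topology on the source: a homomorphism out of $\bigoplus_{\alpha<\omega}H_\alpha$ is continuous precisely when its restriction to each summand $H_\beta$ is continuous, and that restriction is $\imath_\beta$ followed by the continuous canonical inclusion $X_\beta\hookrightarrow\bigoplus_{\alpha<\omega}X_\alpha$; the same reasoning applies to $\bigoplus_{\alpha<\omega}\pi_\alpha$.

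For openness I would pass to box neighbourhoods, which form a basis at $0$ by the coincidence of the two topologies. Given a basic neighbourhood $V=(\prod_\alpha V_\alpha)\cap\bigoplus_{\alpha<\omega}H_\alpha$ with each $V_\alpha$ a neighbourhood of $0$, a finite-support bookkeeping gives $(\bigoplus_{\alpha<\omega}\imath_\alpha)(V)=(\prod_\alpha\imath_\alpha(V_\alpha))\cap\bigoplus_{\alpha<\omega}X_\alpha$. Since each $\imath_\alpha$ is open onto its image there are open neighbourhoods $U_\alpha\ni 0$ in $X_\alpha$ with $\imath_\alpha(V_\alpha)=U_\alpha\cap\im \imath_\alpha$, and substituting yields $(\bigoplus_{\alpha<\omega}\imath_\alpha)(V)=W\cap\im(\bigoplus_{\alpha<\omega}\imath_\alpha)$ for the open box $W=(\prod_\alpha U_\alpha)\cap\bigoplus_{\alpha<\omega}X_\alpha$, so the image is open in the subspace topology. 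The analogous computation for $\pi$ gives $(\bigoplus_{\alpha<\omega}\pi_\alpha)(V)=(\prod_\alpha\pi_\alpha(V_\alpha))\cap\bigoplus_{\alpha<\omega}G_\alpha$, and as each $\pi_\alpha$ is an open surjection each $\pi_\alpha(V_\alpha)$ is a neighbourhood of $0$; hence the image is a box neighbourhood of $0$, and by homogeneity $\bigoplus_{\alpha<\omega}\pi_\alpha$ is an open map onto the whole of $\bigoplus_{\alpha<\omega}G_\alpha$.

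The main obstacle is the openness step, and specifically the need to commute the box description of neighbourhoods with the finite-support constraint of the direct sum: the identity $(\bigoplus_{\alpha<\omega}\imath_\alpha)(V)=(\prod_\alpha\imath_\alpha(V_\alpha))\cap\bigoplus_{\alpha<\omega}X_\alpha$ depends on being able to fill the unused coordinates with $0$, and the identification of the box and coproduct topologies fails for uncountable families. This is precisely why the statement is confined to the countable case $\alpha<\omega$.
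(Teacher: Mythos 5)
Your proposal is correct and is exactly the direct verification the paper has in mind: the paper's own proof of this lemma is literally the single word ``Straightforward,'' and your argument (exactness coordinatewise, continuity via the universal property of the coproduct topology, openness via the coincidence of the box and coproduct topologies for countably many summands, filling unused coordinates with $0$) supplies precisely the omitted details, including the correct identification of where countability is used.
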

\begin{proof}
	Straightforward.
\end{proof}

\begin{theorem}\label{theo_coprod}
	Let $H$ be a  topological abelian group and  let $\bigoplus_{\alpha<\omega} G_\alpha$ a countable coproduct of topological abelian groups. Then
	$\ext (\bigoplus_{\alpha<\omega} G_\alpha, H )\cong \prod_{\alpha<\omega} \ext (G_\alpha,H)$. 
\end{theorem}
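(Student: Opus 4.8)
The plan is to mirror the proof of Theorem~\ref{theo_prod}, interchanging products with coproducts and, correspondingly, push-outs with pull-backs. Write $G=\bigoplus_{\alpha<\omega}G_\alpha$ and let $j_\beta\colon G_\beta\to G$ be the canonical inclusion for each $\beta<\omega$. By Lemma~\ref{lemm_homo} each assignment $[E]\mapsto[Ej_\beta]$ is a homomorphism $\ext(G,H)\to\ext(G_\beta,H)$, and these combine into a homomorphism
\[
\phi\colon \ext(G,H)\longrightarrow \prod_{\alpha<\omega}\ext(G_\alpha,H),\qquad [E]\longmapsto\bigl([Ej_\alpha]\bigr)_{\alpha<\omega}.
\]
The task is to show $\phi$ is bijective.

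For injectivity, I would take $E\colon 0\to H\stackrel{\imath}{\to}X\stackrel{\pi}{\to}G\to0$ with every pull-back $Ej_\beta$ split. Realizing $Ej_\beta$ as the pull-back extension $0\to H\to PB_\beta\stackrel{r_\beta}{\to}G_\beta\to0$ with its canonical map $s_\beta\colon PB_\beta\to X$ (so $\pi\circ s_\beta=j_\beta\circ r_\beta$), Proposition~\ref{theorem_section} supplies continuous sections $\sigma_\beta\colon G_\beta\to PB_\beta$ with $r_\beta\circ\sigma_\beta=\id_{G_\beta}$. Setting $S_\beta:=s_\beta\circ\sigma_\beta\colon G_\beta\to X$ gives $\pi\circ S_\beta=j_\beta$. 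Since the coproduct topology on $G$ is the final group topology for the inclusions $j_\beta$, the maps $S_\beta$ glue to a single continuous homomorphism $S\colon G\to X$ with $S\circ j_\beta=S_\beta$; as $\pi\circ S$ and $\id_G$ agree on every summand they coincide, so $S$ is a continuous right inverse of $\pi$ and $E$ splits by Proposition~\ref{theorem_section}.

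For surjectivity, I would start from a family $\{E_\alpha\colon 0\to H\stackrel{\imath_\alpha}{\to}X_\alpha\stackrel{\pi_\alpha}{\to}G_\alpha\to0\}$. By Lemma~\ref{lemm_coproduct_extensions} the coproduct $\bigoplus_{\alpha<\omega}E_\alpha\colon 0\to\bigoplus_\alpha H\to\bigoplus_\alpha X_\alpha\to G\to0$ is an extension, which I push out along the codiagonal $\nabla\colon\bigoplus_\alpha H\to H$, $(h_\alpha)_\alpha\mapsto\sum_\alpha h_\alpha$ (continuous, because its composite with every inclusion is $\id_H$), obtaining by Lemma~\ref{lemma_pushout} an extension $E:=\nabla\bigl(\bigoplus_\alpha E_\alpha\bigr)$ of $H$ by $G$. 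To conclude $\phi([E])=([E_\alpha])_\alpha$ it suffices to check $Ej_\beta\cong E_\beta$ for each $\beta$. The canonical inclusions $\kappa_\beta\colon H\to\bigoplus_\alpha H$ and $\ell_\beta\colon X_\beta\to\bigoplus_\alpha X_\alpha$, together with $j_\beta$, form a morphism of extensions $E_\beta\to\bigoplus_\alpha E_\alpha$, whence $\kappa_\beta E_\beta\cong(\bigoplus_\alpha E_\alpha)j_\beta$ by the uniqueness clauses of Lemmas~\ref{lemma_pullback} and~\ref{lemma_pushout}. Commuting the outer push-out past this pull-back and using $\nabla\circ\kappa_\beta=\id_H$ gives
\[
Ej_\beta\cong\nabla\bigl(({\textstyle\bigoplus_\alpha E_\alpha})j_\beta\bigr)\cong\nabla(\kappa_\beta E_\beta)\cong(\nabla\circ\kappa_\beta)E_\beta=E_\beta,
\]
as required.

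The purely homological identities used above — that push-out and pull-back commute and compose functorially — are exactly those transferred verbatim from \cite[Chapter III]{maclane} in the same spirit as Lemmas~\ref{lemma_pullback}--\ref{lemma_pushout}, so I expect them to be routine. The genuinely topological content, and the main obstacle, is twofold: first, the continuity of the section $S$ assembled in the injectivity step, which rests precisely on the universal property of the final (coproduct) group topology; and second, the fact that $\bigoplus_\alpha E_\alpha$ is again a \emph{topological} extension, with its maps open onto their images. The latter is where Lemma~\ref{lemm_coproduct_extensions} is indispensable, and it is also the source of the restriction to countable coproducts, since there the box and coproduct topologies coincide and exactness is preserved topologically.
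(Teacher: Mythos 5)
Your proposal is correct and follows essentially the same route as the paper: the same homomorphism $\phi$ built from pull-backs along the canonical inclusions, injectivity by gluing continuous sections through the universal property of the coproduct (final) topology, and surjectivity by pushing out $\bigoplus_{\alpha<\omega}E_\alpha$ along the codiagonal and identifying $E_\beta$ with the resulting pull-back. The only cosmetic difference is that the paper verifies the surjectivity identification via an explicit three-row commutative diagram and the uniqueness clause of Lemma~\ref{lemma_pullback}, whereas you phrase it through the commutation of push-out with pull-back; these amount to the same argument.
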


\begin{proof}
	Consider the canonical inclusion $\mathcal I_\alpha: G_\alpha \to \bigoplus_{\alpha<\omega} G_\alpha$ and define
	\[
	\begin{array}{cccc}
	\phi: & \ext (\bigoplus_{\alpha<\omega} G_\alpha, H)&\longrightarrow & \prod_{\alpha<\omega} \ext (G_\alpha,H)\\
	&\left[E \right]                              & \longmapsto    &\big(\left[E\mathcal I_\alpha \right]\big)_{\alpha<\omega} 
	\end{array}	
	\]
	According to Lemma \ref{lemma_pullback}, $\phi$ is a homomorphism of abelian groups. Let us see that $\phi$ is an isomorphism.
	
	To see that $\phi$ is injective take 	$    E  :0\to H \stackrel{\imath}{\to} X \stackrel{\pi }{\to}\bigoplus_{\alpha<\omega} G_\alpha\to 0$ an extension of topological abelian groups and suppose that $E\mathcal I_\beta$ splits for every $\beta<\omega$. Pick $\beta<\omega$. Take the sequence $    E_\beta :0\to H \stackrel{\imath}{\to} \pi^{-1}(G_\beta) \stackrel{\pi_{|\pi^{-1}(G_\beta)} }{\to}  G_\beta\to 0$. Since $\imath(H)\subset \pi^{-1}(G_\beta)$, $E_\beta$ is an exact sequence. Since  $\pi_{|\pi^{-1}(G_\beta)}$ is open (see \cite[Proposition 2, Chapter 5.1]{bourbaki}) it follows that $E_\beta$ is an extension of topological abelian groups. For every $\beta<\omega$ the following diagram is commutative

	\[
	\xymatrix{
		E:&	0\ar[r]   &H\ar[r]^\imath   &X\ar[r]^\pi        &\bigoplus_{\alpha<\omega} G_\alpha \ar[r]  &0\\
		E_\beta:&	0\ar[r]   &H \ar@{=}[u]\ar[r]^\imath           &\pi^{-1}(G_\beta) \ar[r]^{\;\;\;\;\;\pi_{|\pi^{-1}(G_\beta)}}\ar@{^{(}->}[u]                  &G_\beta\ar@{^{(}->}[u]^{\mathcal I_\beta}\ar[r]  &0
	}
	\]
	Applying Lemma \ref{lemma_pullback} we deduce that $E_\beta$ is equivalent to the pull-back extension $E\mathcal I_\beta$. Then $E_\beta$ splits. By Proposition \ref{theorem_section} for every $\alpha<\omega$ there exist a continuous homomorphism $R_\alpha: G_\alpha \to \pi^{-1}(G_\alpha)$ with $\pi\circ R_\alpha =\id_{G_\alpha}$. Consider the  homomorphism
	\[
	\begin{array}{cccc}
	R: & \bigoplus_{\alpha<\omega} G_\alpha &\longrightarrow & X\\
	& (g_\alpha)_{\alpha<\omega}         &\longmapsto     &\sum_{\alpha<\omega} R_\alpha (g_\alpha)
	\end{array}
	\]
    Using the universal property of the coproduct topology we deduce that $R$ is continuous.
	Since
	\[
	\pi\Big(R\big((g_\alpha)_{\alpha<\omega}\big)\Big)=\pi \big(\sum_{\alpha<\omega} R_\alpha (g_\alpha)\big)= \sum_{\alpha<\omega} \pi \big(  R_\alpha (g_\alpha)\big)=(g_\alpha)_{\alpha<\omega}
	\]
	we obtain that $\pi\circ R=\id_{\bigoplus_{\alpha<\omega}G_\alpha}$ and $E$ splits.
	
	Let us check that $\phi$ is onto. Pick a family of extensions of topological abelian groups $    \{\mathcal E_\alpha :0\to H \stackrel{\imath_\alpha}{\to} X_\alpha \stackrel{\pi_\alpha}{\to}  G_\alpha\to 0 : \alpha<\omega\}$. From Lemma \ref{lemm_coproduct_extensions} we deduce that the sequence \[\textstyle\bigoplus_{\alpha<\omega} \mathcal E_\alpha: 0\longrightarrow \bigoplus_{\alpha<\omega} H \stackrel{\bigoplus_{\alpha<\omega}\imath_\alpha}{\longrightarrow} \bigoplus_{\alpha<\omega}X_\alpha \stackrel{\bigoplus_{\alpha<\omega}\pi_\alpha}{\longrightarrow} \bigoplus_{\alpha<\omega}G_\alpha \longrightarrow 0\] is an extension of topological abelian groups. Define the continuous homomorphism
	\[
	\begin{array}{cccc}
	P: & \bigoplus_{\alpha<\omega} H &\longrightarrow & H\\
	& (h_\alpha)_{\alpha<\omega}         &\longmapsto     &\sum_{\alpha<\omega} h_\alpha
	\end{array}
	\]
	and take the push-out extension  $P\bigoplus_{\alpha<\omega} \mathcal E_\alpha$ as in Lemma \ref{lemma_pushout}. For every $\beta<\omega$ the following diagram is commutative
	\begin{equation}\label{equa_big_diag}
		\xymatrix{
			\mathcal E_\beta:&	0\ar[r]   &H \ar[r]^{\imath_\beta}\ar@{^{(}->}[d]^{}   & X_\beta\ar[r]^{\pi_\beta}\ar@{^{(}->}[d]^{}       &G_\beta\ar@{^{(}->}[d]\ar[r]  &0\\
			 \bigoplus_{\alpha<\omega}\mathcal E_\alpha:&	0\ar[r]   & \bigoplus_{\alpha<\omega}H\ar[d]^P\ar[r]^{\bigoplus \imath_\alpha}           & \bigoplus_{\alpha<\omega}X_\alpha\ar[d]\ar[r]^{\bigoplus_{ }\pi_{\alpha}}                & \bigoplus_{\alpha<\omega}G_\alpha\ar@{=}[d]\ar[r]  &0\\
			  P\bigoplus_{\alpha<\omega} \mathcal E_\alpha:
			&0\ar[r] & H\ar[r] &PO\ar[r] & \bigoplus_{\alpha<\omega}G_\alpha\ar[r]&0 
		}
	\end{equation}
	From Lemma \ref{lemma_pullback} and the commutativity of (\ref{equa_big_diag}) follows that $\mathcal E_\beta$  is equivalent to $(P\bigoplus_{\alpha<\omega} \mathcal E_\alpha)\mathcal I_\beta$ for every $\beta<\omega$ and therefore $\phi ([P\bigoplus_{\alpha<\omega} \mathcal   E_\alpha])= ([\mathcal E_\beta])_{\beta<\omega}$
\end{proof}

\begin{remark}
	It would be interesting to find out if Theorem \ref{theo_coprod} is true for uncountable coproducts of topological abelian groups. Unfortunately,   to prove Lemma \ref{lemm_coproduct_extensions} it is necessary to use that for every countable family of topological abelian groups $\{G_\alpha: \alpha<\omega\}$, the coproduct topology on the direct sum $\bigoplus_{\alpha<\omega} G_\alpha$ coincides with the box topology.
	
	It is worth mentioning that Fulp and Griffith proved in \cite[Theorem 2.13]{fulp} that if $\{G_\alpha: \alpha<\kappa\}$ is a family of groups in $\mathcal L$  such that $(\bigoplus_{\alpha<\kappa}G_\alpha,\tau_{\text{box}})\in \mathcal L$ then $\ext((\bigoplus_{\alpha<\kappa} G_\alpha,\tau_{\text{box}}), H)\cong \prod_{\alpha<\kappa} \ext(G_\alpha, H)$ where $H\in \mathcal L$ and $\tau_{\text{box}}$ is the box topology.
\end{remark}

\section{Relations between $\ext$ and $\extv$}

\begin{theorem}\label{theo_gen_catt}
	Let $E:0\to   Y\stackrel{  {\imath }}{\to}   {X} \stackrel{  {\pi }}{\to}  Z\to 0 $ be an extension of topological abelian groups. Suppose that $Y$ is a Fr\'echet topological vector space and $Z$ is a metrizable topological vector space. Then $X$ admits a compatible topological vector space structure in such a way that $E $ becomes an extension of (metrizable) topological vector spaces.	
\end{theorem}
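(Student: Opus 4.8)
The plan is to reduce the statement to the case in which the quotient is \emph{complete} --- where the theorems of Cattaneo \cite{cattaneo} and Cabello \cite{cabellojmaa} already apply --- and then to transfer the resulting linear structure back to $X$ by the pull-back device used in Section \ref{sect_completions}. The first step is to complete the extension. Since $Y$ is Fr\'echet it is complete, so $\varrho Y=Y$, and it is \v Cech-complete; hence Lemma \ref{prop_completion_ext} applies and
\[
\varrho E:\ 0\to Y\stackrel{\varrho\imath}{\to}\varrho X\stackrel{\varrho\pi}{\to}\varrho Z\to 0
\]
is again an extension of topological abelian groups, where now $\varrho Z$ is a complete metrizable topological vector space (the completion of a metrizable t.v.s.\ is a t.v.s.). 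In this completed setting the quotient is complete, which is exactly the hypothesis under which Cattaneo and Cabello solve the problem: $\varrho X$ carries a compatible topological vector space structure for which $\varrho E$ is an extension of topological vector spaces, so in particular $\varrho\imath$ and $\varrho\pi$ are linear.

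Next I would descend from $\varrho X$ to $X$. Exactly as in the surjectivity half of Theorem \ref{prop_ext_completions_iso}, the canonical inclusion $\mathcal I\colon Z\hookrightarrow\varrho Z$ realizes $E$ as the pull-back $(\varrho E)\mathcal I$; concretely, Lemma \ref{lemma_pullback} furnishes a topological isomorphism identifying $X$ with $(\varrho\pi)^{-1}(Z)\subseteq\varrho X$ under which $\imath$ corresponds to $\varrho\imath$ and $\pi$ to the restriction of $\varrho\pi$. Since $Z$ is a linear subspace of $\varrho Z$ and $\varrho\pi$ is linear, $(\varrho\pi)^{-1}(Z)$ is a linear subspace of the topological vector space $\varrho X$; with the subspace topology it is itself a topological vector space in which $\varrho\imath(Y)$ sits as a linear subspace and which has $Z$ as linear quotient. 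Transporting this structure along the isomorphism equips $X$ with a compatible topological vector space structure making $\imath$ and $\pi$ linear, so that $E$ becomes an extension of topological vector spaces. Moreover $X$ is an extension of the metrizable groups $Y$ and $Z$, hence metrizable, which yields the parenthetical refinement.

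The point demanding care is the compatibility asserted in the descent step: one must verify that the subspace topology inherited by $(\varrho\pi)^{-1}(Z)$ from $\varrho X$ coincides with the original topology of $X$ (so that the new vector topology genuinely refines the given group topology rather than altering it). This is precisely the content of the identification $E\cong(\varrho E)\mathcal I$, and is supplied by Lemma \ref{lemma_pullback} together with the argument already carried out in Theorem \ref{prop_ext_completions_iso}. The only remaining subtlety is to invoke the complete-quotient case in the exact generality needed here, namely for an arbitrary complete metrizable (not necessarily locally convex) $\varrho Z$; this is where the completeness of the quotient --- assumed throughout in \cite{cattaneo} and \cite{cabellojmaa} --- is used, and it is exactly this hypothesis that the passage to $\varrho Z$ allows us to dispense with for $Z$ itself.
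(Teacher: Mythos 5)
Your proposal is correct and follows essentially the same route as the paper: complete the extension via Lemma \ref{prop_completion_ext}, apply Cattaneo's result to $\varrho E$ (whose quotient $\varrho Z$ is now complete), and pull back along $\mathcal I\colon Z\hookrightarrow\varrho Z$ using Lemma \ref{lemma_pullback} to transport the linear structure to $X$. Your concrete realization of the pull-back as $(\varrho\pi)^{-1}(Z)\subseteq\varrho X$ is just an explicit form of the paper's abstract $PB$, and the only cosmetic slip is invoking Cabello's theorem, which is not needed here (it belongs to the locally bounded case).
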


\begin{proof}  Metrizability is a three space properties (see \cite[5.38(e)]{hr}) hence $X$ is metrizable. 
	
	Since $Y$ is metrizable and complete, it is   \v Cech-complete and we can apply Lemma \ref{prop_completion_ext} to deduce that the completion $\varrho  E: 0\to   Y\stackrel{  {\varrho  \imath }}{\to}   {\varrho  X} \stackrel{  {\varrho  \pi }}{\to}  \varrho  Z\to 0 $ is an extension of topological abelian groups. According to \cite[Proposition 2]{cattaneo} there exist a compatible topological vector space structure in $\varrho  X$ and we can regard $\varrho  E$ as an extension of topological vector spaces. Consider the canonical inclusion $\mathcal I: Z\to \varrho  Z$. The following diagram is commutative
	\[
	\xymatrix{
		\varrho 	E:&	0\ar[r]       &Y\ar[r]^{\varrho  \imath }   &\varrho  X\ar[r]^{\varrho  \pi}        &\varrho  Z \ar[r]  &0\\
		E:&	0\ar[r]   &Y\ar@{=}[u]\ar[r]^{  \imath}           &  X\ar[r]^{  \pi}\ar@{^{(}->}[u]                 &Z\ar@{^{(}->}[u]^{\mathcal I}\ar[r]  &0
	}
	\]
	Since $\varrho  E$ is an extension of topological vector spaces and $\mathcal I$ is a continuous linear mapping, in virtue of Lemma \ref{lemma_pullback}, $E$ is equivalent to the pull-back extension of topological vector spaces $(\varrho  E)\mathcal I: 0\to Y\to PB\to Z\to 0$.  Let $T:X\to PB$ be the topological isomorphism that witnesses the equivalence of $E$ and $(\varrho  E)\mathcal I$. Take in $X$ the topological vector space structure induced by $T$. This completes the proof.
\end{proof}

\begin{theorem}\label{theo_gen_cabello} 
	Let $E:0\to   Y\stackrel{  {\imath }}{\to}   {X} \stackrel{  {\pi }}{\to}  Z\to 0 $ be an extension of topological abelian groups. Suppose that $Y$ is a complete locally bounded topological vector space  and $Z$   is a locally bounded topological vector space. Then $X$ admits a topological vector space structure in such a way that $E $ becomes an extension of (locally bounded) topological vector spaces.	
\end{theorem}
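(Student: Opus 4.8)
The plan is to follow the structure of the proof of Theorem \ref{theo_gen_catt} verbatim, replacing metrizability by local boundedness and Cattaneo's theorem by Cabello's. I would begin by recording two standing facts about locally bounded spaces. First, local boundedness is a three space property: if $Y$ and $Z\cong X/Y$ each carry a bounded neighbourhood of zero, then so does $X$ (this is classical, and the Kalton--Peck space exhibits the contrasting failure of the analogous statement for local convexity). In particular $X$ will automatically be locally bounded once it is equipped with a compatible vector space structure. Second, a locally bounded space is metrizable, since the dilates $\tfrac1n U$ of a bounded neighbourhood $U$ of $0$ form a countable base at $0$; hence the complete locally bounded space $Y$ is metrizable and complete, and therefore \v Cech-complete.

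Being \v Cech-complete, $Y$ lets us apply Lemma \ref{prop_completion_ext}, so that $\varrho E: 0\to Y\stackrel{\varrho\imath}{\to}\varrho X\stackrel{\varrho\pi}{\to}\varrho Z\to 0$ is an extension of topological abelian groups. The key point here is that $\varrho Z$ remains locally bounded --- the closure in $\varrho Z$ of a bounded neighbourhood of $0$ in $Z$ is again bounded --- and is of course complete. Thus $\varrho E$ is an extension of a complete locally bounded space by a complete locally bounded space, which is exactly the situation covered by Cabello's theorem in \cite{cabellojmaa}. Invoking it produces a compatible topological vector space structure on $\varrho X$ turning $\varrho E$ into an extension of locally bounded topological vector spaces.

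Finally I would pull back along the canonical continuous linear inclusion $\mathcal I: Z\to\varrho Z$. Exactly as in the commutative square relating $E$ and $\varrho E$ in the proof of Theorem \ref{theo_gen_catt}, the extension $E$ completes the pull-back diagram of $\varrho E$ along $\mathcal I$; hence by the topological--vector--space version of Lemma \ref{lemma_pullback} the extension $E$ is equivalent, \emph{as an extension of topological vector spaces}, to $(\varrho E)\mathcal I: 0\to Y\to PB\to Z\to 0$. Transporting the vector space structure of $PB$ to $X$ through the witnessing topological isomorphism $T: X\to PB$ endows $X$ with a compatible structure for which $E$ is an extension of topological vector spaces, and the three space property of local boundedness noted above shows that all three terms are locally bounded.

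The step I expect to be the main obstacle is checking that Cabello's result genuinely applies to $\varrho E$: one must confirm that passing to the completion lands us in the exact hypotheses of \cite{cabellojmaa} (both kernel and quotient complete and locally bounded), and that the linearity and continuity of $\mathcal I$ allow the pull-back of Lemma \ref{lemma_pullback} to be formed in the category of topological vector spaces rather than merely of topological groups. Everything else --- the three space property of local boundedness, the stability of local boundedness under completion, and the metrizability of $Y$ --- is routine but should be stated explicitly, so that the concluding equivalence is an equivalence of topological vector spaces and not only of topological groups.
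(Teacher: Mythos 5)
Your proposal is correct and follows essentially the same route as the paper: use the three space property of local boundedness, observe that $Y$ is metrizable and complete hence \v Cech-complete so Lemma \ref{prop_completion_ext} applies, note that $\varrho Z$ is again locally bounded so Cabello's theorem endows $\varrho X$ with a compatible vector space structure, and then pull back along $\mathcal I: Z\to \varrho Z$ exactly as in Theorem \ref{theo_gen_catt}. The only (harmless) difference is that you spell out details the paper leaves implicit, such as why $\varrho Z$ stays locally bounded and why the pull-back can be taken in the category of topological vector spaces.
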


\begin{proof} $X$ is locally bounded because local boundedness is a three space property (see \cite[Theorem 3.2]{rodi3}). 
$Y$ is metrizable because every locally bounded Hausdorff topological vector space is metrizable. 
Hence $Y$ is in particular almost metrizable  and we can apply Lemma \ref{prop_completion_ext} to deduce that the completion $\varrho E: 0\to   \varrho Y\stackrel{  {\varrho \imath }}{\to}   {\varrho X} \stackrel{  {\varrho \pi }}{\to}  \varrho Z\to 0 $ is an extension of topological abelian groups. Since $Z$ is locally bounded, its completion $\varrho Z$ is also locally bounded and we are in the conditions of  \cite[Theorem 4]{cabellojmaa}. So $\varrho E$ can be regarded as an extension of topological vector spaces. From here proceed as in the proof of Theorem \ref{theo_gen_catt}.
\end{proof}

\begin{remark}
	Notice that in theorems    \ref{theo_gen_catt} and 	\ref{theo_gen_cabello}, when we construct the topological vector space structure on $X$, the inclusion $X \hookrightarrow \varrho X$ turns out to be a linear map. Consequently, the   multiplication by scalars on $X$, say $*_X: \R\times X\to X$, can be regarded as the restriction of the multiplication by scalars on $\varrho X$, say $*_{\varrho X}:  \R\times \varrho X\to \varrho X$.
	Completing the topological vector space $(X,*_X)$ we obtain another topological vector space $(\varrho X, \varrho (*_X))$. Thus the multiplication by scalars $\varrho (*_X): \R\times \varrho X\to \varrho X$ coincides with $*_X$ in $\R\times X$. Finally, since the continuous homomorphism $\varrho (*_X)$ coincides with the continuous homomorphism $*_{\varrho X}$ in the dense subgroup $\R\times X$, we conclude that both operations are the same. 
\end{remark}

\begin{corollary}
	Let $Y$ and $Z$ be topological vector spaces.
	\begin{enumerate}[(i)]
	 \item If $Y$ is Fr\'echet and $Z$ is metrizable then $\ext(Z,Y)\cong\extv(Z,Y)$.
	 \item If $Y$  is complete locally bounded and $Z$ is locally bounded then $\ext(Z,Y)\cong\extv(Z,Y)$
	 \end{enumerate}
\end{corollary}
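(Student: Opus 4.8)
The plan is to prove both statements simultaneously by exhibiting the \emph{forgetful} assignment as the desired isomorphism. Given an extension of topological vector spaces $E:0\to Y\to X\to Z\to 0$, forgetting the linear structure of $X$ and retaining only its group structure yields an extension of topological abelian groups. Since any equivalence of topological vector space extensions is witnessed by a continuous \emph{linear} isomorphism, which is in particular a continuous group homomorphism, this assignment descends to a well defined map
\[
\Phi:\extv(Z,Y)\longrightarrow \ext(Z,Y).
\]
Because the Baer sum is computed in both categories by the same universal constructions (direct product, pull-back along $\Delta_Z$ and push-out along $\nabla_Y$, the latter two maps being linear), and these constructions commute with forgetting the linear structure, $\Phi$ is a homomorphism of abelian groups. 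It then remains to check that $\Phi$ is bijective; in part (i) I would invoke Theorem \ref{theo_gen_catt} and in part (ii) Theorem \ref{theo_gen_cabello}, the argument being otherwise identical.

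For surjectivity I would take an arbitrary extension of topological abelian groups $E:0\to Y\to X\to Z\to 0$. Under the hypotheses of (i), with $Y$ Fr\'echet and $Z$ metrizable, Theorem \ref{theo_gen_catt} endows $X$ with a compatible topological vector space structure that turns $E$ into an extension of topological vector spaces $\widetilde E$; under the hypotheses of (ii) the same structure is supplied by Theorem \ref{theo_gen_cabello}. As $\widetilde E$ has the very same underlying group extension $E$, one gets $\Phi([\widetilde E])=[E]$, so $\Phi$ is onto.

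For injectivity the key observation I would use is that a continuous homomorphism $T:V\to W$ between two topological vector spaces over $\R$ is automatically $\R$-linear. Indeed, additivity forces $T(qv)=qT(v)$ for every $q\in\Q$; given $\lambda\in\R$, choose rationals $q_n\to\lambda$, so that $q_nv\to\lambda v$ and $q_nT(v)\to\lambda T(v)$ by continuity of the two scalar multiplications, whence $T(\lambda v)=\lim_n T(q_nv)=\lim_n q_nT(v)=\lambda T(v)$ by continuity of $T$ and uniqueness of limits in the Hausdorff space $W$. Now if $\widetilde E_1,\widetilde E_2$ are extensions of topological vector spaces with $\Phi([\widetilde E_1])=\Phi([\widetilde E_2])$, the equivalence of the underlying group extensions is realised by a continuous homomorphism $T$ between the middle terms, which are genuine topological vector spaces; by the observation $T$ is linear, hence it is already an equivalence of topological vector space extensions. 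Therefore $\ker\Phi$ is trivial and $\Phi$ is an isomorphism.

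The only content beyond the previously established Theorems \ref{theo_gen_catt} and \ref{theo_gen_cabello} is the elementary linearity remark of the last paragraph, so I expect no genuine obstacle: surjectivity is a direct translation of those theorems, and injectivity rests entirely on the fact that continuity upgrades $\Q$-homogeneity to $\R$-homogeneity. The mild point to state carefully is that the forgetful functor interchanges the topological vector space pull-back and push-out with their group-theoretic counterparts, which is immediate from the subspace/quotient descriptions used in Lemmas \ref{lemma_pullback} and \ref{lemma_pushout}.
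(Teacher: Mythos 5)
Your proof is correct and follows the route the paper intends: the corollary is stated without proof as an immediate consequence of Theorems \ref{theo_gen_catt} and \ref{theo_gen_cabello}, and your forgetful map $\Phi$, with surjectivity supplied by those theorems, is exactly the implicit argument. The one detail genuinely worth recording --- that a continuous additive map between real topological vector spaces is automatically $\R$-linear, so the forgetful map is injective --- you state and prove correctly, and your observation that pull-back, push-out and Baer sum commute with forgetting the linear structure is also accurate.
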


 \begin{corollary}\label{prop_cattaneo}
 	Let $X$ be an abelian topological group and let $\pi:X\to Z$ be an open continuous homomorphism of $X$ onto   a topological vector space $Z$.
 	\begin{enumerate}[(i)]
   \item	 If $\ker \pi$ is a   Fr\'echet space and $Z$ is metrizable then $X$ is a (metrizable)  topological vector space.
   \item    If $\ker \pi$ is a complete locally bounded topological vector space and $Z$ is locally bounded then $X$ is a (locally bounded) topological vector space.
    
    \end{enumerate}
 \end{corollary}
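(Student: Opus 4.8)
The plan is to exhibit $\pi$ as the quotient map of an extension of topological abelian groups and then to invoke Theorems \ref{theo_gen_catt} and \ref{theo_gen_cabello}, which already do the real work of transporting a topological vector space structure across a short exact sequence.

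First I would set $Y=\ker\pi$ and let $\imath\colon Y\hookrightarrow X$ be the inclusion, obtaining the sequence
\[
E\colon\quad 0\to Y\stackrel{\imath}{\to} X\stackrel{\pi}{\to} Z\to 0.
\]
I would verify that $E$ is an extension of topological abelian groups in the sense of the Preliminaries. The map $\pi$ is continuous, surjective and open by hypothesis, hence continuous and open onto its image $Z$; the inclusion $\imath$ of a subgroup equipped with the subspace topology is automatically a homeomorphism onto its image, so it too is continuous and open onto its image; and the sequence is algebraically exact since $\im\imath=\ker\pi$ and $\pi$ is onto. Here it is important to note that $Y$ carries the subspace topology inherited from $X$, which by hypothesis is the topology of a Fréchet space in part (i) and that of a complete locally bounded topological vector space in part (ii).

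Once $E$ is recognised as an extension, both parts are immediate. For (i), since $Y$ is Fréchet and $Z$ is metrizable, Theorem \ref{theo_gen_catt} applies to $E$ and furnishes a compatible topological vector space structure on $X$ making $E$ an extension of (metrizable) topological vector spaces; in particular $X$ is a metrizable topological vector space. For (ii), since $Y$ is complete locally bounded and $Z$ is locally bounded, Theorem \ref{theo_gen_cabello} applies to $E$ in exactly the same manner and delivers a (locally bounded) topological vector space structure on $X$.

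The only step that requires any care is the first one: checking that the hypotheses on $\pi$ are precisely what is needed to present $E$ as an extension. I do not expect a genuine obstacle, because openness of $\pi$ is assumed outright—so no open-mapping theorem is needed—and the entire analytic content, namely the construction of the scalar multiplication on $X$, is supplied by the two cited theorems. Thus the corollary is essentially a translation of Theorems \ref{theo_gen_catt} and \ref{theo_gen_cabello} into the language of open continuous surjections.
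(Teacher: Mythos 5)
Your proposal is correct and matches the paper's intent: the corollary is stated without proof precisely because it is the immediate translation of Theorems \ref{theo_gen_catt} and \ref{theo_gen_cabello} obtained by viewing $0\to\ker\pi\to X\stackrel{\pi}{\to}Z\to 0$ as an extension of topological abelian groups. Your verification that the hypotheses on $\pi$ (continuous, onto, open) and on the subspace topology of $\ker\pi$ yield such an extension is exactly the routine step the paper leaves implicit.
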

 
\begin{corollary}\label{prop_kspaces}
	A   metrizable topological vector space   $Z$   satisfies  $\ext(Z, \R)=0$ if and only if $\varrho Z$ is a $\mathcal K$-space.
\end{corollary}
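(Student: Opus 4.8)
The plan is to obtain the equivalence by stringing together the two transfer isomorphisms already available for $\extv$ and then reading off the definition of a $\mathcal K$-space; no genuinely new construction is needed. The first thing I would record is that $\R$ is a Fr\'echet space, being metrizable, complete and locally convex, and that it is in particular complete and metrizable. These are exactly the hypotheses needed to feed $Y=\R$ into both ingredients.

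The core of the argument is a short chain of isomorphisms. Since $Z$ is metrizable and $\R$ is Fr\'echet, part (i) of the corollary comparing $\ext(Z,Y)$ with $\extv(Z,Y)$ gives $\ext(Z,\R)\cong\extv(Z,\R)$. Next, since $\R$ is complete and metrizable, Theorem \ref{prop_completions_tvs} applied with $Y=\R$ yields $\extv(Z,\R)\cong\extv(\varrho Z,\R)$. Composing, I get $\ext(Z,\R)\cong\extv(\varrho Z,\R)$, so $\ext(Z,\R)=0$ holds precisely when $\extv(\varrho Z,\R)=0$.

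It then remains to identify the right-hand vanishing with the $\mathcal K$-space property. Because $Z$ is metrizable, its completion $\varrho Z$ is complete and metrizable, so it is an admissible object for that definition; and by definition $\varrho Z$ is a $\mathcal K$-space exactly when $\extv(\varrho Z,\R)=0$. Threading these equivalences together gives the claim: $\ext(Z,\R)=0$ if and only if $\varrho Z$ is a $\mathcal K$-space.

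I do not expect a real obstacle here, since all the weight is carried by the comparison corollary and by Theorem \ref{prop_completions_tvs}. The only points deserving a moment's care are checking that $\R$ simultaneously meets the distinct hypotheses of the two results (Fr\'echet for the first, complete metrizable for the second) and confirming that $\varrho Z$ inherits metrizability from $Z$, so that the definition of $\mathcal K$-space applies to $\varrho Z$ without modification.
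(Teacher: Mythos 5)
Your argument is correct and is exactly the intended route: the paper states this corollary without proof, and the implicit justification is precisely your chain $\ext(Z,\R)\cong\extv(Z,\R)\cong\extv(\varrho Z,\R)$ via the preceding comparison corollary and Theorem \ref{prop_completions_tvs}, followed by the definition of a $\mathcal K$-space applied to the complete metrizable space $\varrho Z$. Your attention to the two distinct hypotheses on $\R$ and to the metrizability of $\varrho Z$ is appropriate and complete.
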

\section*{Acknowledgements} 
\indent

The author gratefully acknowledges the many helpful suggestions of Mar\'\i a Jes\'us Chasco, Xabier Dom\'inguez and Mikhail Tkachenko during the preparation of the paper. The author thanks the referee as well for the careful report. Finally, the author wishes to thank the Asociaci\'on de Amigos de la Universidad de Navarra and the Spanish Ministerio de Econom\'{\i}a y Competitividad (grant MTM 2013-42486-P) for their financial support.

\end{document}